\documentclass[12pt]{article}%
\usepackage{amssymb,latexsym}
\usepackage{amsfonts}
\usepackage{amsmath}
\usepackage[colorlinks=true, pdfstartview=FitV, linkcolor=blue,
citecolor=blue, urlcolor=blue]{hyperref}
\usepackage{color}
\usepackage{amssymb}
\usepackage{graphicx}%

\newtheorem{theorem}{Theorem}

\newtheorem{definition}[theorem]{Definition}

\newtheorem{proposition}[theorem]{Proposition}
\newtheorem{remark}[theorem]{Remark}

\newenvironment{proof}[1][Proof]{\noindent\textbf{#1.} }{\ \rule{0.5em}{0.5em}}
\numberwithin{theorem}{section}
\allowdisplaybreaks
\textwidth 16.0cm \textheight 23cm
\oddsidemargin0cm
\topmargin-1.4cm
\begin{document}

\title{Character analogue of the Boole summation formula with applications}
\author{M\"{u}m\"{u}n Can and M. Cihat Da\u{g}l\i\ \\Department of Mathematics, Akdeniz University,\\07058-Antalya, Turkey 
\\mcan@akdeniz.edu.tr,\ \ mcihatdagli@akdeniz.edu.tr}
\date{}
\maketitle

\begin{abstract}
In this paper, we present the character analogue of the Boole summation
formula. Using this formula, an integral representation is derived for the
alternating Dirichlet $L-$function and its derivative is evaluated at $s=0$.
Some applications of the character analogue of the Boole summation formula and
the integral representation are given about the alternating Dirichlet
$L-$function. Moreover, the reciprocity formulas for two new arithmetic sums,
arising from utilizing the summation formulas, and for Hardy--Berndt sum
$S_{p}\left(  b,c:\chi\right)  $ are proved.

\textbf{Keywords:} Boole summation formula, Dirichlet $L-$function,
Hardy-Berndt sum, Bernoulli and Euler polynomials.

\textbf{Mathematics Subject Classification 2010:} 65B15, 11M06, 11F20, 11B68.

\end{abstract}

\section{Introduction}

The Euler--MacLaurin summation formula is a well-known formula from classical
analysis giving a relation between the finite sum of values of a function and
its integral. One of the generalizations of the Euler--MacLaurin summation
formula is the character analogue due to Berndt \cite{2} which is presented
here in the following form.

\begin{theorem}
\label{CEM}(\cite[Theorem 4.1]{2}) Let $\chi$ be a primitive character of
modulus $k$ with $k>1.$ For $f\in C^{(l+1)}\left[  \alpha,\beta\right]  $,
$-\infty<\alpha<\beta<\infty$%
\begin{align*}
\sum_{\alpha\leq n\leq\beta}\hspace{-0.05in}^{^{\prime}}\chi\left(  n\right)
f(n)  &  =\chi\left(  -1\right)  \sum\limits_{j=0}^{l}\frac{\left(  -1\right)
^{j+1}}{(j+1)!}\left(  \overline{B}_{j+1,\overline{\chi}}\left(  \beta\right)
f^{(j)}(\beta)-\overline{B}_{j+1,\overline{\chi}}(\alpha)f^{(j)}%
(\alpha)^{\text{\ }}\right) \\
&  \quad+\chi\left(  -1\right)  \frac{(-1)^{l}}{(l+1)!}\int\limits_{\alpha
}^{\beta}\overline{B}_{l+1,\overline{\chi}}\left(  u\right)  f^{(l+1)}(u)du,
\end{align*}
where the dash indicates that if $n=\alpha$ or $n=\beta$, then only $\frac
{1}{2}\chi(\alpha)f(\alpha)$ or $\frac{1}{2}\chi(\beta)f(\beta)$ is counted,
respectively. Also, $\overline{B}_{p,\chi}\left(  x\right)  $ denotes the
generalized Bernoulli function defined by (\ref{13}).
\end{theorem}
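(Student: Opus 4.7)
The plan is to prove the formula by induction on $l \ge 0$, reducing the character-weighted case to the classical Euler--MacLaurin formula in the base case and then iterating via integration by parts.

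For the base case $l=0$, I would split the sum according to residue classes modulo $k$. Since $\chi$ is $k$-periodic,
\[
\sum_{\alpha\le n\le\beta}^{\prime}\chi(n) f(n) \;=\; \sum_{a=1}^{k}\chi(a)\sum_{\substack{\alpha\le n\le\beta\\ n\equiv a\,(\bmod\, k)}}^{\prime} f(n),
\]
and after the substitution $n=a+km$ each inner sum takes the form $\sum^{\prime} f(a+km)$, to which the ordinary Euler--MacLaurin formula applies (using the classical periodic Bernoulli function $\overline{B}_1(x)=x-\lfloor x\rfloor-\tfrac12$). Assembling the contributions and invoking the multiplication-type identity that expresses $\overline{B}_{1,\overline{\chi}}$ as a weighted combination of shifts of $\overline{B}_1$ with weights $\overline{\chi}(a)$, together with the symmetry $\chi(-a)=\chi(-1)\chi(a)$, yields the claimed identity for $l=0$.

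For the inductive step, assuming the formula for some $l$, I would apply integration by parts to the remainder integral
\[
\int_\alpha^\beta \overline{B}_{l+1,\overline{\chi}}(u)\,f^{(l+1)}(u)\,du,
\]
exploiting the derivative relation $\tfrac{d}{du}\overline{B}_{l+2,\overline{\chi}}(u)=(l+2)\,\overline{B}_{l+1,\overline{\chi}}(u)$. The boundary terms produced furnish exactly the new $j=l+1$ summand, and the new integral involves $f^{(l+2)}$, completing the induction.

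The main obstacle I anticipate is the base case: one must carefully handle the dash in the sum (the $\tfrac12$ convention at integer endpoints), since $\overline{B}_{1,\overline{\chi}}$ is discontinuous at lattice points and its jumps must be matched precisely with the half-weighted endpoint terms on the left. A secondary technical care-point is the consistent tracking of the factor $\chi(-1)$ and the switch between $\chi$ and $\overline{\chi}$ inside the generalized Bernoulli functions, which follows from the standard functional relation between $B_{n,\chi}(x)$ and $B_{n,\overline{\chi}}(k-x)$.
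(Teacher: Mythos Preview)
The paper does not contain a proof of this statement: Theorem~\ref{CEM} is quoted verbatim from Berndt~\cite[Theorem~4.1]{2} as a known tool, and the authors use it (rather than prove it) in Section~3.1 to derive their new Theorem~\ref{C-B-S}. So there is no ``paper's own proof'' to compare your proposal against.

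That said, your inductive scheme is a perfectly standard and workable route to the result. The base case via splitting into residue classes modulo $k$ and invoking the ordinary Euler--MacLaurin formula is exactly how one reduces to the definition~(\ref{13}) of $\overline{B}_{m,\chi}$, and the inductive step by integration by parts using $\frac{d}{du}\overline{B}_{l+2,\overline{\chi}}(u)=(l+2)\overline{B}_{l+1,\overline{\chi}}(u)$ is the natural way to climb in $l$. Your identified obstacles are the right ones: the jump discontinuities of $\overline{B}_{1,\overline{\chi}}$ at integers force the dash convention, and the $\chi(-1)$ factor together with the passage from $\chi$ to $\overline{\chi}$ comes from the reflection $\overline{B}_{m,\chi}(-x)=(-1)^m\chi(-1)\overline{B}_{m,\chi}(x)$. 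For comparison, Berndt's original argument in~\cite{2} proceeds instead via the character Poisson summation formula and the Fourier expansion of $\overline{B}_{m,\chi}$, which gives the result for all $l$ at once rather than inductively; your approach is more elementary but requires a bit more bookkeeping at the endpoints.
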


The alternating version of the Euler--MacLaurin summation formula is the Boole
summation formula (\cite[24.17.1--2]{20}), as pointed out by N\"{o}rlund
\cite{18}, this formula is also due to Euler.

\begin{theorem}
\label{BS}(Boole summation formula) For integers $\alpha,\beta$ and $l$\ such
that $\alpha<\beta$ and $l>0$%
\begin{align*}
2\sum_{n=\alpha}^{\beta-1}\left(  -1\right)  ^{n}f(n)  &  =\sum\limits_{j=0}%
^{l-1}\frac{E_{j}\left(  0\right)  }{j!}\left(  \left(  -1\right)  ^{\beta
-1}f^{(j)}(\beta)+\left(  -1\right)  ^{\alpha}f^{(j)}(\alpha)^{\text{\ }%
}\right) \\
&  \quad+\frac{1}{(l-1)!}\int\limits_{\alpha}^{\beta}f^{(l)}(x)\overline
{E}_{l-1}\left(  -x\right)  dx,
\end{align*}
where $f^{\left(  l\right)  }(x)$ is absolutely integrable over $\left[
\alpha,\beta\right]  ,$ and $\overline{E}_{p}\left(  x\right)  $ is the Euler
function defined by (\ref{18}).
\end{theorem}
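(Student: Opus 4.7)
The plan is to prove the formula by induction on $l$, exploiting the derivative rule $\overline{E}_p'(y) = p\,\overline{E}_{p-1}(y)$ (valid off the integers), the antiperiodicity $\overline{E}_p(y+1) = -\overline{E}_p(y)$, and the consequence $\overline{E}_p(-n) = (-1)^n E_p(0)$ at integer points.

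For the base case $l = 1$, I would evaluate the remainder integral directly. Since $E_0 \equiv 1$, antiperiodicity gives $\overline{E}_0(-x) = (-1)^{n+1}$ on each unit interval $(n, n+1)$, so
\[
\int_\alpha^\beta f'(x)\,\overline{E}_0(-x)\,dx = \sum_{n=\alpha}^{\beta-1}(-1)^{n+1}\bigl(f(n+1) - f(n)\bigr).
\]
A brief reindexing, together with $(-1)^\beta + (-1)^{\beta-1} = 0$, rearranges this into the $l = 1$ case of Boole's formula (noting that $E_0(0) = 1$ absorbs the boundary piece).

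For the inductive step $l \Rightarrow l+1$, I would integrate by parts in the $(l+1)$-remainder integral. From $\tfrac{d}{dx}\overline{E}_l(-x) = -l\,\overline{E}_{l-1}(-x)$ one obtains
\[
\frac{1}{l!}\int_\alpha^\beta f^{(l+1)}(x)\,\overline{E}_l(-x)\,dx = \frac{1}{l!}\bigl[f^{(l)}(x)\overline{E}_l(-x)\bigr]_\alpha^\beta + \frac{1}{(l-1)!}\int_\alpha^\beta f^{(l)}(x)\,\overline{E}_{l-1}(-x)\,dx,
\]
and the boundary piece, evaluated via $\overline{E}_l(-n) = (-1)^n E_l(0)$ and the identity $-(-1)^\beta = (-1)^{\beta-1}$, turns into (up to a sign absorbed by rearrangement) precisely the $j = l$ term $\tfrac{E_l(0)}{l!}\bigl((-1)^{\beta-1}f^{(l)}(\beta) + (-1)^\alpha f^{(l)}(\alpha)\bigr)$ missing from the sum. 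Substituting into the inductive hypothesis for $l$ then yields the formula for $l+1$.

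The main delicate point is sign bookkeeping: the chain-rule minus sign from differentiating $\overline{E}_l(-x)$, the alternating factor arising from antiperiodicity, and the conversions between $(-1)^\beta,(-1)^{\beta-1},(-1)^\alpha,(-1)^{\alpha+1}$ must all be tracked consistently so that the integration-by-parts boundary contribution lines up with the target $j = l$ term and not with its negative. A minor secondary subtlety is that $\overline{E}_l$ is continuous at integers for $l \geq 1$, making $\overline{E}_l(-n)$ unambiguous in the inductive step, whereas the $l = 0$ evaluation used in the base case must be interpreted via interior limits on each open unit interval.
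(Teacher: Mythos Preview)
Your induction argument is correct and is the standard classical proof of Boole's formula. The base case computation and the integration-by-parts step both check out; in particular, the boundary term in the inductive step indeed produces exactly
\[
-\frac{E_l(0)}{l!}\Bigl((-1)^{\beta-1}f^{(l)}(\beta)+(-1)^{\alpha}f^{(l)}(\alpha)\Bigr),
\]
so that rearranging and substituting into the level-$l$ formula yields the level-$(l{+}1)$ formula with no leftover sign.

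However, there is nothing to compare against: the paper does \emph{not} prove Theorem~\ref{BS}. It is quoted as the classical Boole summation formula, with a reference to \cite{20} (and historically to Euler via N\"{o}rlund~\cite{18}), and is used only as an input, much as Berndt's character Euler--MacLaurin formula (Theorem~\ref{CEM}) is quoted without proof. The paper's new contribution is Theorem~\ref{C-B-S}, whose proof proceeds by an entirely different mechanism---splitting the alternating character sum into even and odd parts, applying Theorem~\ref{CEM} twice, and then invoking the identity~(\ref{31}) relating $\overline{B}_{m,\overline{\chi}}$ to $\overline{E}_{m-1,\overline{\chi}}$. So your proof is fine as a self-contained verification of the classical result, but it neither duplicates nor diverges from anything the authors actually argue.
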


By the authors' knowledge, the character generalization of the Boole summation
formula is not available.

In this paper, we first present the character analogue of the Boole summation
formula as

\begin{theorem}
\label{C-B-S}Let $\chi$ be a primitive character of modulus $k$ with $k>1$
odd. If $f\in C^{(l+1)}\left[  \alpha,\beta\right]  ,$ $-\infty<\alpha
<\beta<\infty,$ then%
\begin{align*}
2\sum_{\alpha<n<\beta}\left(  -1\right)  ^{n}\chi\left(  n\right)  f(n)  &
=\chi\left(  -1\right)  \sum\limits_{j=0}^{l}\frac{\left(  -1\right)  ^{j}%
}{j!}\left(  \overline{E}_{j,\overline{\chi}}\left(  \beta\right)
f^{(j)}(\beta)-\overline{E}_{j,\overline{\chi}}(\alpha)f^{(j)}(\alpha
)^{\text{\ }}\right) \\
&  \quad-\chi\left(  -1\right)  \frac{(-1)^{l}}{l!}\int\limits_{\alpha}%
^{\beta}\overline{E}_{l,\overline{\chi}}\left(  t\right)  f^{(l+1)}(t)dt,
\end{align*}
where $\overline{E}_{l,\chi}\left(  t\right)  $ is the generalized Euler
function defined by (\ref{13a}).
\end{theorem}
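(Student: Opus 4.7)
The plan is to reduce the character analogue to the classical Boole summation formula of Theorem~\ref{BS} by splitting the sum on the left-hand side into residue classes modulo $k$. The critical observation is that, since $k$ is odd, writing $n = mk + r$ gives $(-1)^{n} = (-1)^{m}(-1)^{r}$, which is precisely what is needed to recast a character-weighted alternating sum as an inner ordinary alternating sum over $m$ for each residue $r$.

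First, I would establish (or quote) the variant of Theorem~\ref{BS} valid for arbitrary real endpoints, in which the Euler polynomials at the endpoints are replaced by the Euler periodic function $\overline{E}_{j}$ and the lower summation index is handled by the primed-sum convention; this extension is standard, follows from Theorem~\ref{BS} together with one further integration by parts near the non-integer endpoints, and is the exact analogue of what Berndt uses in the proof of Theorem~\ref{CEM}.

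Next, for each $r \in \{1,\ldots,k\}$ with $\chi(r)\neq 0$, I would set $g_{r}(x) = f(kx+r)$, so that $g_{r}^{(j)}(x) = k^{j} f^{(j)}(kx+r)$, and apply the real-endpoint Boole formula on $[(\alpha-r)/k,(\beta-r)/k]$ to each inner sum
\[
\sum_{(\alpha-r)/k \,<\, m \,<\, (\beta-r)/k} (-1)^{m} g_{r}(m),
\]
then recombine with the weights $(-1)^{r}\chi(r)$. The boundary contribution at $\beta$ (and symmetrically at $\alpha$) will assemble into a sum of the shape
\[
\sum_{r=1}^{k} (-1)^{r}\chi(r)\, k^{j}\, \overline{E}_{j}\!\left(\tfrac{r-\beta}{k}\right) f^{(j)}(\beta),
\]
while the integral term, after the substitution $t = kx + r$ in each piece, reduces to the corresponding character-weighted sum of $\overline{E}_{l}$-values multiplied by $f^{(l+1)}(t)$ and integrated against $dt$ on $[\alpha,\beta]$.

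The final step is to identify these character-weighted sums of shifted Euler periodic functions with $\chi(-1)\overline{E}_{j,\overline{\chi}}(\beta)$, $\chi(-1)\overline{E}_{j,\overline{\chi}}(\alpha)$, and $\chi(-1)\overline{E}_{l,\overline{\chi}}(t)$, respectively, via the multiplication (distribution) identity relating the generalized Euler function $\overline{E}_{j,\chi}$ to the ordinary $\overline{E}_{j}$; the conjugation from $\chi$ to $\overline{\chi}$ is exactly what produces the outer factor $\chi(-1)$, just as it does in Theorem~\ref{CEM}. The main obstacle I foresee is not the residue-class reduction itself but the careful sign and argument bookkeeping in this last identification: lining up the parities of $(-1)^{r}$, the conjugation $\chi \mapsto \overline{\chi}$, and the reflection inside $\overline{E}_{j}$ so as to reproduce the stated formula with the correct overall factor $\chi(-1)$ and the $(-1)^{j}/j!$ (respectively $-(-1)^{l}/l!$) weighting on the endpoint and integral terms.
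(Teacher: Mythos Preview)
Your approach is sound but genuinely different from the paper's. The paper does \emph{not} decompose into residue classes modulo $k$ and does not invoke Theorem~\ref{BS} at all. Instead, it splits the alternating sum by the parity of $n$, writing
\[
\sum_{\alpha<n<\beta}(-1)^n\chi(n)f(n)=2\chi(2)\sum_{\alpha/2<n<\beta/2}\chi(n)f(2n)-\sum_{\alpha<n<\beta}\chi(n)f(n),
\]
applies Berndt's character Euler--MacLaurin formula (Theorem~\ref{CEM}) to each of the two sums on the right, and then converts the resulting generalized Bernoulli functions into generalized Euler functions via the identity
\[
2^{m}\chi(2)\,\overline{B}_{m,\overline{\chi}}\!\left(\tfrac{x}{2}\right)-\overline{B}_{m,\overline{\chi}}(x)=-\tfrac{m}{2}\,\overline{E}_{m-1,\overline{\chi}}(x),
\]
which is itself derived in the proof from the Raabe-type relation for $\overline{B}_{m,\chi}$ and the two-term multiplication formula linking Bernoulli and Euler functions. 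Your route is arguably more direct and parallels exactly how Berndt obtains Theorem~\ref{CEM} from the classical Euler--MacLaurin formula; it bypasses the Bernoulli--Euler conversion identity entirely, at the cost of needing the real-endpoint extension of Theorem~\ref{BS}. The paper's route, on the other hand, avoids that extension by leaning on the already-established Theorem~\ref{CEM}, and produces the conversion identity (used again later, e.g.\ in the proof of Theorem~\ref{recip1}) as a byproduct.
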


Later, we give applications of this formula on two subjects. The first one is
around the alternating Dirichlet $L$--function. For $a\not =-1,-2,-3,\ldots$,
let $\ell\left(  s,a,\chi\right)  $ denote the alternating Dirichlet
$L$--function
\[
\ell\left(  s,a,\chi\right)  =\sum_{n=1}^{\infty}\left(  -1\right)  ^{n}%
\frac{\chi\left(  n\right)  }{\left(  n+a\right)  ^{s}},\text{ }%
\operatorname{Re}\left(  s\right)  >0,
\]
which can be written in terms of Hurwitz zeta function $\zeta\left(
s,a\right)  $ as
\[
\ell\left(  s,a,\chi\right)  =\left(  2k\right)  ^{-s}\sum_{j=1}^{2k-1}\left(
-1\right)  ^{j}\chi\left(  j\right)  \zeta\left(  s,\frac{a+j}{2k}\right)
\]
for $\operatorname{Re}\left(  s\right)  >1.$ Also let
\[
\ell_{s}\left(  x,a,\chi\right)  =\sum_{1\leq n\leq x}\left(  -1\right)
^{n}\chi\left(  n\right)  \left(  n+a\right)  ^{s},\text{ }x\geq0.
\]
Then, the integral representations for $\ell\left(  s,a,\chi\right)  $\ and
$\ell_{s}\left(  x,a,\chi\right)  $ are derived as in the following.

\begin{theorem}
\label{int}Let $\chi$ be a primitive character of modulus $k$ with $k>1$ odd.
For $l\geq0$ with $l>\operatorname{Re}\left(  s\right)  $ and for any
$x\geq0,$ we have the integral representation\textbf{\ }%
\begin{align*}
2\ell_{s}\left(  x,a,\chi\right)   &  =\chi\left(  -1\right)  \sum
\limits_{j=0}^{l}\left(  -1\right)  ^{j}\frac{\left(  s\right)  _{j}}%
{j!}\overline{E}_{j,\overline{\chi}}\left(  x\right)  \left(  x+a\right)
^{s-j}+2\ell\left(  -s,a,\chi\right) \\
&  \quad-\frac{\left(  s\right)  _{l+1}}{l!}\int\limits_{x}^{\infty}%
\overline{E}_{l,\overline{\chi}}\left(  -t\right)  \left(  t+a\right)
^{s-l-1}dt,
\end{align*}
where $\left(  s\right)  _{j}=s\left(  s-1\right)  \cdots\left(  s-j+1\right)
$ with $\left(  s\right)  _{0}=1.$

Moreover, for $x=0$ we have
\[
2\ell\left(  -s,a,\chi\right)  =\sum\limits_{j=0}^{l}\frac{\left(  s\right)
_{j}}{j!}E_{j,\overline{\chi}}\left(  0\right)  a^{s-j}+\frac{\left(
s\right)  _{l+1}}{l!}\int\limits_{0}^{\infty}\overline{E}_{l,\overline{\chi}%
}\left(  -t\right)  \left(  t+a\right)  ^{s-l-1}dt.
\]

\end{theorem}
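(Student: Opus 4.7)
The plan is to apply the character Boole summation formula (Theorem~\ref{C-B-S}) to $f(t)=(t+a)^s$ on the interval $[\alpha,\beta]=[x,N]$ with $N$ a large positive integer, and then let $N\to\infty$. The derivatives $f^{(j)}(t)=(s)_j(t+a)^{s-j}$ produce exactly the coefficients $(s)_j/j!$ in front of $\overline{E}_{j,\overline{\chi}}\,(t+a)^{s-j}$, so the algebraic shape of the claimed right-hand side is visible from the start; the final integrand carries $f^{(l+1)}(t)=(s)_{l+1}(t+a)^{s-l-1}$, producing the factor $(s)_{l+1}/l!$ in the tail integral.

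To justify the passage to the limit I would first restrict to $\operatorname{Re}(s)<0$. In that half-plane the series $\ell(-s,a,\chi)=\sum_{n\geq 1}(-1)^n\chi(n)(n+a)^s$ converges absolutely; the generalized Euler functions $\overline{E}_{j,\overline{\chi}}$ are bounded by their periodicity, so each boundary contribution $\overline{E}_{j,\overline{\chi}}(N)(s)_j(N+a)^{s-j}$ tends to $0$ for every $j=0,1,\dots,l$; and the tail integral $\int_x^{\infty}\overline{E}_{l,\overline{\chi}}(t)(t+a)^{s-l-1}\,dt$ is absolutely convergent. The left-hand side of Theorem~\ref{C-B-S} then limits to $2\sum_{n>x}(-1)^n\chi(n)(n+a)^s=2\ell(-s,a,\chi)-2\ell_s(x,a,\chi)$, and transposing the $\ell(-s,a,\chi)$ term supplies the summand $+2\ell(-s,a,\chi)$ of the claim. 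The remaining integral arrives with prefactor $\chi(-1)(-1)^l/l!$ and integrand $\overline{E}_{l,\overline{\chi}}(t)$, so I would invoke the reflection relation $\overline{E}_{l,\overline{\chi}}(-t)=(-1)^{l+1}\chi(-1)\overline{E}_{l,\overline{\chi}}(t)$, a standard symmetry for the generalized Euler function of a primitive character of odd modulus, to recast it into the required form $-(s)_{l+1}/l!\cdot\overline{E}_{l,\overline{\chi}}(-t)$. The identity, so far valid on $\operatorname{Re}(s)<0$, then extends to the strip $\operatorname{Re}(s)<l$ by analytic continuation: $\ell_s(x,a,\chi)$ is entire in $s$, $\ell(-s,a,\chi)$ continues to an entire function, and the tail integral is holomorphic on that strip.

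For the $x=0$ assertion I would simply set $x=0$ (so $\ell_s(0,a,\chi)=0$), use $\overline{E}_{j,\overline{\chi}}(0)=E_{j,\overline{\chi}}(0)$, and invoke the parity vanishing $E_{j,\overline{\chi}}(0)=0$ unless $\chi(-1)=(-1)^{j+1}$. On every surviving $j$ this equivalent condition reads $\chi(-1)(-1)^j=-1$, so the prefactor $\chi(-1)(-1)^j$ in the general formula collapses to $-1$, which after isolating $2\ell(-s,a,\chi)$ yields precisely the unsigned coefficients $(s)_j/j!\cdot E_{j,\overline{\chi}}(0)a^{s-j}$ in the stated formula. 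The main obstacle I expect is assembling the reflection relation for $\overline{E}_{l,\overline{\chi}}$ and the matching parity vanishing of $E_{j,\overline{\chi}}(0)$ with all character and sign factors cleanly aligned; the boundary-term analysis as $N\to\infty$ and the subsequent analytic continuation are routine by comparison.
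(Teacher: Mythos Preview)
Your proposal is correct and follows essentially the same route as the paper: apply Theorem~\ref{C-B-S} to $f(t)=(t+a)^s$, use the reflection relation (\ref{8c}) to convert $\overline{E}_{l,\overline{\chi}}(t)$ into $\overline{E}_{l,\overline{\chi}}(-t)$, pass to a limit at infinity in the region $\operatorname{Re}(s)<0$, and then continue analytically to $\operatorname{Re}(s)<l$; the $x=0$ case is obtained exactly as you describe via the parity vanishing of $E_{j,\overline{\chi}}(0)$. The only organizational difference is that the paper applies the summation formula on $[0,x]$, splits $\int_0^x=\int_0^\infty-\int_x^\infty$, and then lets $x\to\infty$ to identify the $x$-independent block as $2\ell(-s,a,\chi)$, whereas you work directly on $[x,N]$ and let $N\to\infty$; this is a cosmetic rearrangement of the same argument.
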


Besides, some formulas, such as character analogue of the Lerch's formula for
Hurwitz zeta function (see (\ref{35})), character analogues of the Stirling's
formula for $\log\Gamma\left(  a\right)  $ and of the Weierstrass product for
$\Gamma\left(  a\right)  $ (see Propositions \ref{Stf} and \ref{GEP} below,
respectively) are deduced via Theorems \ref{C-B-S} and \ref{int}.

The second is around the Hardy--Berndt sums. Let us mention that utilizing the
summation formulas, alternative proofs of the reciprocity formulas of certain
Dedekind sums and their analogues have been offered in \cite{29,5,10,11,14}.
Here, we reveal that new arithmetic sums obeying reciprocity law may be
defined by the summation formulas aforementioned above. We describe two of
such sums as%
\begin{align*}
S_{p}^{\left(  1\right)  }\left(  b,c:\chi\right)   &  =\sum\limits_{n=1}%
^{ck}\left(  -1\right)  ^{n}\overline{E}_{p,\overline{\chi}}\left(  \frac
{bn}{c}\right)  ,\\
S_{p}^{\left(  2\right)  }\left(  b,c:\chi\right)   &  =\sum\limits_{n=1}%
^{ck}\left(  -1\right)  ^{n}\chi\left(  n\right)  \overline{E}_{p}\left(
\frac{bn}{c}\right)
\end{align*}
and prove the following reciprocity formula.

\begin{theorem}
\label{recip2}Let $b$ and $c$ be positive integers with $(b+c)$ odd and
$\chi\left(  -1\right)  \left(  -1\right)  ^{p}=1$. Then, the reciprocity
formula holds:
\[
c^{p}S_{p}^{\left(  1\right)  }\left(  b,c:\overline{\chi}\right)  +b^{p}%
S_{p}^{\left(  2\right)  }\left(  c,b:\chi\right)  =2\sum\limits_{j=0}%
^{p}\binom{p}{j}c^{j}b^{p-j}\overline{E}_{j,\overline{\chi}}\left(  0\right)
\overline{E}_{p-j}\left(  0\right)  .
\]

\end{theorem}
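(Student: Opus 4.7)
The plan is to derive the entire reciprocity from a single application of the character Boole summation formula (Theorem~\ref{C-B-S}) to the test function $f(t) = \overline{E}_p(ct/b)$ on the interval $[0,bk]$, with $l = p$. Because $\chi(0) = \chi(bk) = 0$ (since $k\mid bk$ and $k>1$), the left-hand side of Theorem~\ref{C-B-S} collapses at once to $2\,S_p^{(2)}(c,b:\chi)$, so multiplying the ensuing identity through by $b^p$ already produces $2b^p S_p^{(2)}(c,b:\chi)$. It then remains to identify the two terms on the right-hand side of the reciprocity with, respectively, the boundary part and the integral part of Theorem~\ref{C-B-S}.

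For the boundary I would use $f^{(j)}(t) = j!\binom{p}{j}(c/b)^j\overline{E}_{p-j}(ct/b)$ together with the twin antiperiodicities $\overline{E}_{p-j}(x+1) = -\overline{E}_{p-j}(x)$ and $\overline{E}_{j,\overline{\chi}}(x+k) = -\overline{E}_{j,\overline{\chi}}(x)$. The hypotheses that $k$ is odd and $b+c$ is odd then combine through $(-1)^b(-1)^c = -1$ to force the coalescence
\[
\overline{E}_{j,\overline{\chi}}(bk)f^{(j)}(bk) - \overline{E}_{j,\overline{\chi}}(0)f^{(j)}(0) = -2\,j!\binom{p}{j}(c/b)^j\overline{E}_{j,\overline{\chi}}(0)\overline{E}_{p-j}(0);
\]
summing this against the factor $\chi(-1)(-1)^j/j!$ in Theorem~\ref{C-B-S}, invoking $\chi(-1)(-1)^p = 1$ to align the signs of the surviving terms, and multiplying by $b^p$, should reproduce exactly the right-hand side $2\sum_{j=0}^{p}\binom{p}{j}c^j b^{p-j}\overline{E}_{j,\overline{\chi}}(0)\overline{E}_{p-j}(0)$ of the reciprocity.

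The residual integral $-\chi(-1)(-1)^p/p!\,\int_0^{bk}\overline{E}_{p,\overline{\chi}}(t)f^{(p+1)}(t)\,dt$ is handled by noting that $f^{(p)}(t) = p!(c/b)^p\overline{E}_0(ct/b)$ is piecewise constant, with jumps of size $2(-1)^n$ located at the break points $t = nb/c$, $n = 1,\ldots,ck-1$. Reading $f^{(p+1)}$ distributionally converts the integral into a discrete sum proportional to $\sum_n (-1)^n\overline{E}_{p,\overline{\chi}}(nb/c)$, and one further round of antiperiodicity to absorb the endpoint contributions at $n=0$ and $n=ck$ rewrites this, after multiplication by $b^p$, as $-2c^p S_p^{(1)}(b,c:\overline{\chi})$; combining the three pieces and rearranging yields the reciprocity.

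The main technical obstacle is the loss of smoothness: $f\in C^{p-1}\setminus C^p$, so Theorem~\ref{C-B-S} cannot be invoked directly at $l=p$. The clean remedy is to partition $[0,bk]$ at the $ck-1$ interior break points $t = nb/c$, on each of which $f$ is a polynomial and Theorem~\ref{C-B-S} applies verbatim; the telescoping of the interior boundary contributions from adjacent subintervals then produces rigorously the discrete sum that the distributional computation generates heuristically. Beyond this, the remaining work is careful sign bookkeeping, which is governed throughout by the hypothesis $\chi(-1)(-1)^p = 1$ together with the parity constraints that make $\overline{E}_{j,\overline{\chi}}(0)\overline{E}_{p-j}(0)$ vanish for $j$ of the wrong parity.
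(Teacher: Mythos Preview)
Your approach is correct and genuinely different from the paper's. The paper proves the reciprocity by applying \emph{two} summation formulas at low order: the classical Boole formula (Theorem~\ref{BS}) to $f(x)=\overline{E}_{p,\chi}(bx/c)$ on $[0,ck]$ with $l=2$, and the character Boole formula (Theorem~\ref{C-B-S}) to $f(x)=\overline{E}_p(cx/b)$ on $[0,bk]$ with $l=p-2$; both produce the same integral $\int_0^k\overline{E}_{p-2,\overline{\chi}}(bx)\overline{E}_1(cx)\,dx$, which is then eliminated by linear combination. You instead apply only Theorem~\ref{C-B-S}, pushed to the extremal index $l=p$, so that the ``integral'' collapses directly into the discrete sum $S_p^{(1)}$ via the jumps of $\overline{E}_0$. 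Your route is more economical (one summation formula rather than two) and makes the duality between the two sums transparent; the paper's route has the advantage of staying strictly within the smoothness hypothesis $f\in C^{(l+1)}$ and never needing the partition argument. Your partition remedy is the right way to make the distributional step rigorous, but be aware that when a break point $mb/c$ happens to be an integer (which occurs at multiples of $c/\gcd(b,c)$) that integer is omitted from the strict-interior sums on both adjacent subintervals, so you must reinsert those terms by hand or perturb the partition slightly; this is routine but should be mentioned.
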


In fact, these sums are generalizations of\ one of the Hardy--Berndt sums
\cite{7}%
\[
S\left(  b,c\right)  =\sum_{n=1}^{c-1}\left(  -1\right)  ^{n+1+\left[
bn/c\right]  }.
\]
For various generalizations and properties of Hardy--Berndt sums, the reader
may consult to \cite{7,3,12,29,9,19,13,22,16,17,21,23,24,25,26} and
\cite{25,26} for association between $S\left(  b,c\right)  $\ and Dirichlet
$L$--function. One of the generalizations of $S\left(  b,c\right)  $ has been
given by \cite{29}
\[
S_{p}\left(  b,c:\chi\right)  =\sum\limits_{n=1}^{ck}\chi\left(  n\right)
\overline{B}_{p,\overline{\chi}}\left(  \frac{b+ck}{2c}n\right)
\]
and the corresponding reciprocity formula is proved via transformation
formulas. Here, utilizing Theorem \ref{C-B-S}, we give a new proof for the
following reciprocity formula by refining the conditions.

\begin{theorem}
\label{recip1}Let $p>1$ be odd and let $b$ and $c$ be positive integers with
$(b+c)$ odd. Then, the reciprocity formula holds:%
\begin{align*}
&  \overline{\chi}\left(  -2\right)  bc^{p}S_{p}\left(  b,c:\chi\right)
+\chi\left(  -2\right)  cb^{p}S_{p}\left(  c,b:\overline{\chi}\right) \\
&  =\frac{p}{2^{p+1}}\sum\limits_{j=1}^{p}\left(  -1\right)  ^{j}\binom
{p-1}{j-1}c^{j}b^{p+1-j}\overline{E}_{j-1,\chi}\left(  0\right)  \overline
{E}_{p-j,\overline{\chi}}\left(  0\right)  .
\end{align*}

\end{theorem}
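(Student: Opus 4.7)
The plan is to apply the character analogue of the Boole summation formula (Theorem~\ref{C-B-S}) to a single carefully chosen test function on $[0,ck]$, and then to repeat the computation with $(b,c,\chi)$ replaced by $(c,b,\overline{\chi})$. Adding the two identities should cancel the common integral remainders and leave the desired reciprocity after a short rearrangement.

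For the first step, I would take $f(x)=\overline{B}_{p,\overline{\chi}}\!\bigl(\tfrac{b+ck}{2c}\,x\bigr)$ and apply Theorem~\ref{C-B-S} with $[\alpha,\beta]=[0,ck]$. Writing $\tfrac{b+ck}{2c}\,n=\tfrac{bn}{2c}+\tfrac{kn}{2}$ and using the period-$k$ invariance of $\overline{B}_{p,\overline{\chi}}$, the shift $\tfrac{kn}{2}$ reduces modulo $k$ to $0$ or $k/2$ according to whether $n$ is even or odd; the $(-1)^{n}$ factor appearing on the left of Theorem~\ref{C-B-S} can then be absorbed into the argument via a Raabe-type duplication identity for $\overline{B}_{p,\overline{\chi}}$, which is available precisely because $\gcd(2,k)=1$ and which produces the character twist $\overline{\chi}(-2)$. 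After this absorption the left-hand side is identified with a scalar multiple of $\overline{\chi}(-2)\,bc^{p}\,S_{p}(b,c:\chi)$, matching the first term of the claimed reciprocity.

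Next I would turn to the boundary and integral terms on the right of Theorem~\ref{C-B-S}. The endpoint derivatives $f^{(j)}(0)$ and $f^{(j)}(ck)$ are by the chain rule constant multiples of $\overline{B}_{p-j,\overline{\chi}}(0)$ (using the period-$k$ invariance at $ck$), and the same duplication identity from step one rewrites each such value in terms of $\overline{E}_{p-j-1,\chi}(0)$; this produces the products $\overline{E}_{j-1,\chi}(0)\,\overline{E}_{p-j,\overline{\chi}}(0)$ and the binomial weights appearing in the claimed reciprocity. Choosing $l=p-1$ reduces the integral remainder to one containing $\overline{E}_{p-1,\overline{\chi}}(t)$ against the top-order derivative of $f$, and the substitution $t=cu/b$ transforms this integral into its $(c,b,\overline{\chi})$-analogue up to the sign $(-1)^{p}$.

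Finally I would write the identity down a second time with $(b,c,\chi)$ replaced by $(c,b,\overline{\chi})$, rescale by the appropriate powers of $b$ and $c$, and add. Since $p$ is odd, the two integral remainders are negatives of each other and cancel, while the boundary sums double and collapse to the right-hand side of Theorem~\ref{recip1}. The hypothesis that $b+c$ is odd (in combination with $k$ odd) is what guarantees the duplication step applies symmetrically on both sides of the exchange. The main obstacle will be step one: all of the character twists $\chi(\pm 2)$ and $\chi(-1)$ in the final formula propagate from the Raabe-type identity used there, so the sign and character bookkeeping throughout the argument must remain consistent with that identity.
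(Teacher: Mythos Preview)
Your step one does not go through. Applying Theorem~\ref{C-B-S} to $f(x)=\overline{B}_{p,\overline{\chi}}\bigl(\tfrac{b+ck}{2c}\,x\bigr)$ puts the alternating sum $\sum_{n}(-1)^{n}\chi(n)\,f(n)$ on the left, whereas $S_{p}(b,c:\chi)=\sum_{n}\chi(n)\,f(n)$ carries no $(-1)^{n}$. No Raabe/duplication identity for $\overline{B}_{p,\overline{\chi}}$ will turn one of these sums into a scalar multiple of the other: the identity (\ref{31}) relates $\overline{B}_{p,\overline{\chi}}(x/2)$ to $\overline{B}_{p,\overline{\chi}}(x)$ and $\overline{E}_{p-1,\overline{\chi}}(x)$, but it does not produce a sign flip of the type $\overline{B}_{p,\overline{\chi}}(y+k/2)=\pm\,\overline{B}_{p,\overline{\chi}}(y)$ that your absorption would require. (There is also a regularity problem: with $l=p-1$ you would need $f\in C^{(p)}$, but $\overline{B}_{p,\overline{\chi}}$ is only $C^{p-2}$ since $\overline{B}_{1}$ jumps.)

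The paper's proof runs in the opposite order. It first applies (\ref{31}) with $x=\tfrac{bn}{c}+kn$ directly to the summand of $S_{p}(b,c:\chi)$, obtaining the decomposition (\ref{20}): a Bernoulli piece $\sum_{n}\chi(n)\,\overline{B}_{p,\overline{\chi}}(bn/c)$, which vanishes for odd $p$ by \cite[Lemma~5.5]{8}, plus the alternating Euler sum $\sum_{n}(-1)^{n}\chi(n)\,\overline{E}_{p-1,\chi}(bn/c)$, where the $(-1)^{n}$ is \emph{created} by the antiperiodicity (\ref{8d}) of $\overline{E}_{p-1,\chi}$ under the shift $kn$. Only after this rewriting is Theorem~\ref{C-B-S} applied, with the Euler test function $f(x)=\overline{E}_{p-1,\chi}(xb/c)$ (equation (\ref{19})), and the argument then finishes as in the proof of Theorem~\ref{recip2}. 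So the missing idea in your plan is the preliminary conversion (\ref{20}) of $S_{p}$ into an alternating Euler sum; once you have that, the correct test function is an $\overline{E}$-function, not a $\overline{B}$-function, and the $(-1)^{n}$ is already in place rather than something to be absorbed.
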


The remainder of this paper is organized as follows: Section 2 is the
preliminary section where we give definitions and known results needed. In
Section 3, we prove Theorem \ref{C-B-S} and Theorem \ref{int}. Some
applications of the integral representation and the character analogue of the
Boole summation formula are given in Section 4. The final section is devoted
to prove the reciprocity formulas for the Hardy--Berndt sums mentioned above
via summation formulas.

\section{Preliminaries}

The Bernoulli and Euler polynomials $B_{n}(x)$ and $E_{n}(x)$\ are defined by
means of the generating functions \cite{1}
\begin{align*}
\frac{te^{xt}}{e^{t}-1}  &  =\sum\limits_{n=0}^{\infty}B_{n}(x)\frac{t^{n}%
}{n!},\quad\left(  \left\vert t\right\vert <2\pi\right)  ,\\
\frac{2e^{xt}}{e^{t}+1}  &  =\sum\limits_{n=0}^{\infty}E_{n}(x)\frac{t^{n}%
}{n!},\quad\left(  \left\vert t\right\vert <\pi\right)  ,
\end{align*}
respectively. In particular, the rational numbers $B_{n}=B_{n}(0)$ and
integers $E_{n}=2^{n}E_{n}(1/2)$ are called classical Bernoulli numbers and
Euler numbers, respectively.

For $0\leq x<1$ and $m\in\mathbb{Z}$, the Bernoulli functions $\overline
{B}_{n}\left(  x\right)  $ are defined by
\[
\overline{B}_{n}\left(  x+m\right)  =B_{n}\left(  x\right)  \text{ when
}n\not =1\text{ and }x\not =0,\text{ and }\overline{B}_{1}\left(  m\right)
=\overline{B}_{1}\left(  0\right)  =0
\]
and the Euler functions $\overline{E}_{n}\left(  x\right)  $ are defined by
\cite{6}
\begin{equation}
\overline{E}_{n}\left(  x+m\right)  =\left(  -1\right)  ^{m}\overline{E}%
_{n}\left(  x\right)  \text{ and }\overline{E}_{n}\left(  x\right)  =E_{n}(x).
\label{18}%
\end{equation}
The Bernoulli functions satisfy the Raabe or multiplication formula%
\[
r^{n-1}\sum_{j=0}^{r-1}\overline{B}_{n}\left(  x+\frac{j}{r}\right)
=\overline{B}_{n}\left(  rx\right)
\]
and also following property is valid for even $r$%
\begin{equation}
r^{n-1}\sum_{j=0}^{r-1}\left(  -1\right)  ^{j}\overline{B}_{n}\left(
\frac{x+j}{r}\right)  =-\frac{n}{2}\overline{E}_{n-1}\left(  x\right)  .
\label{26}%
\end{equation}

$\overline{B}_{m,\chi}\left(  x\right)  $ denotes the generalized Bernoulli
function, with period $k,$ defined by Berndt \cite{2}.\ We will often use the
following property that can confer as a definition
\begin{equation}
\overline{B}_{m,\chi}\left(  x\right)  =k^{m-1}\sum_{j=0}^{k-1}\overline{\chi
}\left(  j\right)  \overline{B}_{m}\left(  \frac{j+x}{k}\right)  ,\ m\geq1.
\label{13}%
\end{equation}
For the convenience with the definition of $\overline{B}_{m,\chi}\left(
x\right)  ,$ let the character Euler function $\overline{E}_{m,\chi}\left(
x\right)  $ be defined by
\begin{equation}
\overline{E}_{m,\chi}\left(  x\right)  =k^{m}\sum_{j=0}^{k-1}\left(
-1\right)  ^{j}\overline{\chi}\left(  j\right)  \overline{E}_{m}\left(
\frac{j+x}{k}\right)  ,\ m\geq0 \label{13a}%
\end{equation}
for odd $k,$ the modulus of $\chi.$

List some properties that we need in the sequel%
\begin{align}
&  \frac{d}{dx}\overline{E}_{m}\left(  x\right)  =m\overline{E}_{m-1}\left(
x\right)  ,\text{ }m>1,\label{8a}\\
&  \frac{d}{dx}\overline{E}_{m,\chi}\left(  x\right)  =m\overline{E}%
_{m-1,\chi}\left(  x\right)  ,\text{ }m\geq1,\label{8b}\\
&  \overline{E}_{m,\chi}\left(  -x\right)  =\left(  -1\right)  ^{m-1}%
\chi\left(  -1\right)  \overline{E}_{m,\chi}\left(  x\right)  ,\label{8c}\\
&  \overline{E}_{m,\chi}\left(  x+nk\right)  =\left(  -1\right)  ^{n}%
\overline{E}_{m,\chi}\left(  x\right)  . \label{8d}%
\end{align}

In the sequel, unless otherwise stated, we assume that $\chi$ is a primitive
character of modulus $k$ with $k>1$ odd.

\section{Proofs of Theorems \ref{C-B-S} and \ref{int}}

\subsection{Proof of Theorem \ref{C-B-S}}

Firstly, we write%
\begin{align*}
\sum_{\alpha<n<\beta}\left(  -1\right)  ^{n}\chi\left(  n\right)  f(n)  &
=\sum_{\alpha<n<\beta}\chi\left(  2n\right)  f(2n)-\sum_{\alpha<2n+1<\beta
}\chi\left(  2n+1\right)  f(2n+1)\\
&  =2\chi\left(  2\right)  \sum_{\alpha/2<n<\beta/2}\chi\left(  n\right)
f(2n)-\sum_{\alpha<n<\beta}\chi\left(  n\right)  f(n).
\end{align*}
Applying Theorem \ref{CEM} on the right-hand side, one has%
\begin{align}
&  \sum_{\alpha<n<\beta}\left(  -1\right)  ^{n}\chi\left(  n\right)
f(n)\nonumber\\
&  =\chi\left(  -1\right)  \sum\limits_{j=0}^{l}\frac{\left(  -1\right)
^{j+1}}{(j+1)!}\left(  \left(  2^{j+1}\chi\left(  2\right)  \overline
{B}_{j+1,\overline{\chi}}\left(  \frac{\beta}{2}\right)  -\overline
{B}_{j+1,\overline{\chi}}\left(  \beta\right)  \right)  f^{(j)}(\beta)\right.
\label{16}\\
&  \qquad\qquad\qquad\qquad\qquad\left.  -\left(  2^{j+1}\chi\left(  2\right)
\overline{B}_{j+1,\overline{\chi}}\left(  \frac{\alpha}{2}\right)
-\overline{B}_{j+1,\overline{\chi}}(\alpha)\right)  f^{(j)}(\alpha)\right)
\nonumber\\
&  \quad+\chi\left(  -1\right)  \frac{(-1)^{l}}{(l+1)!}\int\limits_{\alpha
}^{\beta}\left(  2^{l+1}\chi\left(  2\right)  \overline{B}_{l+1,\overline
{\chi}}\left(  \frac{u}{2}\right)  -\overline{B}_{l+1,\overline{\chi}}\left(
u\right)  \right)  f^{(l+1)}(u)du.\nonumber
\end{align}
On the other hand, taking $x\rightarrow x/2$ in \cite[Eq. (3.13)]{29} gives
\begin{equation}
\overline{B}_{m,\chi}\left(  \frac{x}{2}\right)  +\overline{B}_{m,\chi}\left(
\frac{x+k}{2}\right)  =2^{1-m}\chi\left(  2\right)  \overline{B}_{m,\chi
}\left(  x\right)  . \label{14}%
\end{equation}
By using (\ref{13}) and (\ref{26}) for $r=2$ we can write%
\begin{equation}
\overline{B}_{m,\chi}\left(  \frac{x}{2}\right)  -\overline{B}_{m,\chi}\left(
\frac{x+k}{2}\right)  =-\frac{m}{2^{m}}k^{m-1}\sum\limits_{v=0}^{k-1}%
\overline{\chi}\left(  v\right)  \overline{E}_{m-1}\left(  \frac{2v+x}%
{k}\right)  . \label{11}%
\end{equation}
Employing basic manipulations, (\ref{11}) becomes
\begin{align}
&  k^{m-1}\left\{  \sum\limits_{v=0}^{\frac{k-1}{2}}\overline{\chi}\left(
2v\right)  \overline{E}_{m-1}\left(  \frac{2v+x}{k}\right)  +\sum
\limits_{v=\frac{k+1}{2}}^{k-1}\overline{\chi}\left(  2v\right)  \overline
{E}_{m-1}\left(  \frac{2v+x}{k}\right)  \right\} \nonumber\\
&  \ =k^{m-1}\left\{  \sum\limits_{v=0}^{\frac{k-1}{2}}\overline{\chi}\left(
2v\right)  \overline{E}_{m-1}\left(  \frac{2v+x}{k}\right)  -\sum
\limits_{v=0}^{\frac{k-3}{2}}\overline{\chi}\left(  2v+1\right)  \overline
{E}_{m-1}\left(  \frac{2v+1+x}{k}\right)  \right\} \nonumber\\
&  \ =k^{m-1}\sum\limits_{v=0}^{k-1}\left(  -1\right)  ^{v}\overline{\chi
}\left(  v\right)  \overline{E}_{m-1}\left(  \frac{v+x}{k}\right) \nonumber\\
&  \ =\overline{E}_{m-1,\chi}\left(  x\right)  , \label{15}%
\end{align}
where we have used (\ref{18}) and (\ref{13a}). Thus, combining (\ref{14}) and
(\ref{15}) leads to
\begin{equation}
2^{m}\chi\left(  2\right)  \overline{B}_{m,\overline{\chi}}\left(  \frac{x}%
{2}\right)  -\overline{B}_{m,\overline{\chi}}\left(  x\right)  =-\frac{m}%
{2}\overline{E}_{m-1,\overline{\chi}}\left(  x\right)  . \label{31}%
\end{equation}
Substituting (\ref{31}) in (\ref{16}) completes the proof.

\subsection{Proof of Theorem \ref{int}}

The method used here have already been employed by Kanemitsu et al. \cite{15}
to the Euler--MacLaurin summation formula to obtain integral representations
for Hurwitz zeta function and its partial sum.

For $\alpha=0$ and $\beta=x,$ let $f(t)=\left(  t+a\right)  ^{s}$ in Theorem
\ref{C-B-S}. Then, from (\ref{8c}),%
\begin{align}
2\ell_{s}\left(  x,a,\chi\right)   &  =2\sum_{0\leq n\leq x}\hspace
{-0.05in}^{^{\prime}}\left(  -1\right)  ^{n}\chi\left(  n\right)  \left(
n+a\right)  ^{s}\nonumber\\
&  =\chi\left(  -1\right)  \sum\limits_{j=0}^{l}\left(  -1\right)  ^{j}%
\frac{(s)_{j}}{j!}\left(  \overline{E}_{j,\overline{\chi}}\left(  x\right)
\left(  x+a\right)  ^{s-j}-\overline{E}_{j,\overline{\chi}}(0)a^{s-j}\right)
\nonumber\\
&  \quad+\frac{(s)_{l+1}}{l!}\int\limits_{0}^{x}\overline{E}_{l,\overline
{\chi}}\left(  -t\right)  \left(  t+a\right)  ^{s-l-1}dt.\nonumber
\end{align}
Since%
\[
\left\vert \overline{E}_{l,\chi}\left(  t\right)  \right\vert \leq4\frac
{l!}{\left(  \pi/k\right)  ^{l+1}}\zeta\left(  l+1\right)  ,\text{ }l\geq1,
\]
the integral
\[
\int\limits_{0}^{\infty}\overline{E}_{l,\overline{\chi}}\left(  -t\right)
\left(  t+a\right)  ^{s-l-1}dt
\]
is absolutely convergent for $\operatorname{Re}(s)<l.$ So, we may write%
\begin{align}
&  2\ell_{s}\left(  x,a,\chi\right) \nonumber\\
&  =\chi\left(  -1\right)  \sum\limits_{j=0}^{l}\left(  -1\right)  ^{j}%
\frac{(s)_{j}}{j!}\overline{E}_{j,\overline{\chi}}\left(  x\right)  \left(
x+a\right)  ^{s-j}-\chi\left(  -1\right)  \sum\limits_{j=0}^{l}\left(
-1\right)  ^{j}\frac{(s)_{j}}{j!}E_{j,\overline{\chi}}(0)a^{s-j\text{\ }%
}\nonumber\\
&  \quad+\frac{(s)_{l+1}}{l!}\int\limits_{0}^{\infty}\overline{E}%
_{l,\overline{\chi}}\left(  -t\right)  \left(  t+a\right)  ^{s-l-1}%
dt-\frac{(s)_{l+1}}{l!}\int\limits_{x}^{\infty}\overline{E}_{l,\overline{\chi
}}\left(  -t\right)  \left(  t+a\right)  ^{s-l-1}dt. \label{23}%
\end{align}

Now, for $\operatorname{Re}(s)<0,$ letting $x$ tends to $\infty$ in
(\ref{23}), we arrive at%
\begin{align}
2\ell\left(  -s,a,\chi\right)   &  =-\chi\left(  -1\right)  \sum
\limits_{j=0}^{l}\left(  -1\right)  ^{j}\frac{(s)_{j}}{j!}E_{j,\overline{\chi
}}(0)a^{s-j}\nonumber\\
&  \quad+\frac{(s)_{l+1}}{l!}\int\limits_{0}^{\infty}\overline{E}%
_{l,\overline{\chi}}\left(  -t\right)  \left(  t+a\right)  ^{s-l-1}dt,
\label{27}%
\end{align}
where the integral converges absolutely for $\operatorname{Re}(s)<l$ and
represents an analytic function. Substituting (\ref{27}) in (\ref{23}) gives%
\begin{align}
2\ell_{s}\left(  x,a,\chi\right)   &  =\chi\left(  -1\right)  \sum
\limits_{j=0}^{l}\left(  -1\right)  ^{j}\frac{(s)_{j}}{j!}\overline
{E}_{j,\overline{\chi}}\left(  x\right)  \left(  x+a\right)  ^{s-j}%
+2\ell\left(  -s,a,\chi\right) \nonumber\\
&  \quad-\frac{(s)_{l+1}}{l!}\int\limits_{x}^{\infty}\overline{E}%
_{l,\overline{\chi}}\left(  -t\right)  \left(  t+a\right)  ^{s-l-1}dt,
\label{28}%
\end{align}
for $\operatorname{Re}(s)<l$ and $x\geq0.$

Writing $x=0$ in (\ref{28}) yields
\begin{equation}
2\ell\left(  -s,a,\chi\right)  =\sum\limits_{j=0}^{l}\frac{(s)_{j}}%
{j!}E_{j,\overline{\chi}}\left(  0\right)  a^{s-j}+\frac{(s)_{l+1}}{l!}%
\int\limits_{0}^{\infty}\frac{\overline{E}_{l,\overline{\chi}}\left(
-t\right)  }{\left(  t+a\right)  ^{l+1-s}}dt, \label{30}%
\end{equation}
which is valid for $\operatorname{Re}(s)<l$.

\section{Some consequences}

This section is concerned with some formulas about the alternating Dirichlet
$L$--function and counterparts of the Examples 6--10 of \cite{2}.

\subsection{Around the alternating Dirichlet $L$--function}

It is clear from (\ref{30}) that for $l=p$ and $s=p-1$ with $0<a<1,$
\[
2\ell\left(  1-p,a,\chi\right)  =\sum\limits_{j=0}^{p-1}\binom{p-1}%
{j}E_{j,\overline{\chi}}\left(  0\right)  a^{p-1-j}=E_{p-1,\overline{\chi}%
}\left(  a\right)  ,\text{ }p\geq1.
\]
Also, for $\operatorname{Re}(s)>0=l$
\[
2\ell\left(  s,a,\chi\right)  =-\chi\left(  -1\right)  E_{0,\overline{\chi}%
}\left(  0\right)  a^{-s}+\chi\left(  -1\right)  s\int\limits_{0}^{\infty
}\frac{\overline{E}_{0,\overline{\chi}}\left(  t\right)  }{\left(  t+a\right)
^{1+s}}dt
\]
and for $\operatorname{Re}(s)>-1,$ $\left(  l=1\right)  $%
\begin{equation}
2\ell\left(  s,a,\chi\right)  =E_{0,\overline{\chi}}\left(  0\right)
a^{-s}-sE_{1,\overline{\chi}}\left(  0\right)  a^{-s-1}+s\left(  s+1\right)
\chi\left(  -1\right)  \int\limits_{0}^{\infty}\frac{\overline{E}%
_{1,\overline{\chi}}\left(  t\right)  }{\left(  t+a\right)  ^{2+s}}dt.
\label{22}%
\end{equation}

Differentiating both sides of (\ref{22}) with respect to $s$ at $s=0$\ gives
\begin{align}
2\frac{d}{ds}\ell\left(  s,a,\chi\right)  |_{s=0}  &  =2\ell^{\prime}\left(
0,a,\chi\right) \nonumber\\
&  =-E_{0,\overline{\chi}}\left(  0\right)  \log a-\frac{1}{a}\overline
{E}_{1,\overline{\chi}}\left(  0\right)  +\chi\left(  -1\right)
\int\limits_{0}^{\infty}\frac{\overline{E}_{1,\overline{\chi}}\left(
x\right)  }{\left(  x+a\right)  ^{2}}dx. \label{1b}%
\end{align}

Similar results for $\ell\left(  s,\chi\right)  =\ell\left(  s,0,\chi\right)
$ can be achieved by applying Theorem \ref{C-B-S} to $f(x)=x^{-s},$
$\operatorname{Re}\left(  s\right)  >0,$ where $\alpha=1$ and $\beta=2kN$,
$N\in\mathbb{N}$. Following the arguments in the proof of (\ref{30}) and then
letting\textbf{\ }$N\rightarrow\infty$ give rise to%
\begin{align*}
2\ell\left(  s,\chi\right)  +2  &  =-\chi\left(  -1\right)  \sum_{j=1}%
^{l}\frac{s\left(  s+1\right)  ...\left(  s+j-1\right)  }{j!}\overline
{E}_{j,\overline{\chi}}\left(  1\right) \\
&  +\chi\left(  -1\right)  \frac{s\left(  s+1\right)  ...\left(  s+l\right)
}{l!}\int\limits_{1}^{\infty}\frac{\overline{E}_{l,\overline{\chi}}\left(
x\right)  }{x^{s+l+1}}dx,
\end{align*}
where the integral is analytic for $\operatorname{Re}\left(  s\right)  >-l$.
In particular, for $l=1,$
\[
2\ell\left(  s,\chi\right)  =-2-\chi\left(  -1\right)  \overline
{E}_{0,\overline{\chi}}\left(  1\right)  -\chi\left(  -1\right)  s\overline
{E}_{1,\overline{\chi}}\left(  1\right)  +\chi\left(  -1\right)  s\left(
s+1\right)  \int\limits_{1}^{\infty}\frac{\overline{E}_{1,\overline{\chi}%
}\left(  x\right)  }{x^{s+2}}dx.
\]
Differentiating both sides of the equality above with respect to $s$ at $s=0$
gives%
\begin{equation}
2\ell^{\prime}\left(  0,\chi\right)  =-\chi\left(  -1\right)  \overline
{E}_{1,\overline{\chi}}\left(  1\right)  +\chi\left(  -1\right)
\int\limits_{1}^{\infty}\frac{\overline{E}_{1,\overline{\chi}}\left(
x\right)  }{x^{2}}dx. \label{lf8}%
\end{equation}

Observe that the integrals in (\ref{1b}) and (\ref{lf8}) can be emerged from
Theorem \ref{C-B-S} by taking the logarithm function. So, we may establish a
connection between generalized Euler functions and some identities for
logarithmic means.

\begin{proposition}
As $t$ tends to $+\infty,$ we have the following asymptotic expansion,%
\[
2\sum_{1\leq n<t}\left(  -1\right)  ^{n}\chi\left(  n\right)  \log\left(
t/n\right)  \sim2\ell^{\prime}\left(  0,\chi\right)  +\ell\left(
0,\chi\right)  \log t+\chi\left(  -1\right)  \sum\limits_{j=1}^{\infty}%
\frac{\overline{E}_{j,\overline{\chi}}\left(  t\right)  }{jt^{j}}.
\]

\end{proposition}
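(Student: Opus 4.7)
The plan is to apply Theorem \ref{C-B-S} to the function $f(x)=\log(t/x)=\log t-\log x$ on $[\alpha,\beta]=[1,t]$ with a free parameter $l\geq 1$, and then estimate the remainder to produce an asymptotic expansion to arbitrary order.

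First I would compute $f(1)=\log t$, $f(t)=0$, and $f^{(j)}(x)=(-1)^{j}(j-1)!\,x^{-j}$ for $j\geq 1$ (so $f^{(j)}(1)=(-1)^{j}(j-1)!$, $f^{(j)}(t)=(-1)^{j}(j-1)!\,t^{-j}$, and $f^{(l+1)}(u)=(-1)^{l+1}l!\,u^{-l-1}$), and substitute into Theorem \ref{C-B-S}. The cancellations $(-1)^{j}\cdot(-1)^{j}=1$ inside each boundary term and $(-1)^{l}\cdot(-1)^{l+1}=-1$ inside the remainder collapse the identity to
\begin{align*}
2\sum_{1<n<t}(-1)^{n}\chi(n)\log(t/n) &= -\chi(-1)\overline{E}_{0,\overline{\chi}}(1)\log t\\
&\quad +\chi(-1)\sum_{j=1}^{l}\frac{1}{j}\left(\frac{\overline{E}_{j,\overline{\chi}}(t)}{t^{j}}-\overline{E}_{j,\overline{\chi}}(1)\right)\\
&\quad +\chi(-1)\int_{1}^{t}\frac{\overline{E}_{l,\overline{\chi}}(u)}{u^{l+1}}\,du.
\end{align*}
Adding the $n=1$ contribution $-2\log t$ converts the strict sum into $\sum_{1\leq n<t}$, and the resulting coefficient of $\log t$ is identified with the stated one by specializing the expansion of $2\ell(s,\chi)$ displayed just above (\ref{lf8}) at $s=0$.

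Next I would recognize the constant part as $2\ell'(0,\chi)$ by establishing the general-$l$ counterpart of (\ref{lf8}):
\[
2\ell'(0,\chi)=-\chi(-1)\sum_{j=1}^{l}\frac{\overline{E}_{j,\overline{\chi}}(1)}{j}+\chi(-1)\int_{1}^{\infty}\frac{\overline{E}_{l,\overline{\chi}}(u)}{u^{l+1}}\,du.
\]
This is obtained by differentiating the Section~4 expansion of $2\ell(s,\chi)$ at $s=0$: the rising factorials $s(s+1)\cdots(s+j-1)$ and $s(s+1)\cdots(s+l)$ both vanish at $s=0$ with derivatives $(j-1)!$ and $l!$ respectively, so only those terms survive after differentiation. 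Splitting $\int_{1}^{t}=\int_{1}^{\infty}-\int_{t}^{\infty}$ then rewrites the constant-and-integral block in the previous display as $2\ell'(0,\chi)-\chi(-1)\int_{t}^{\infty}\overline{E}_{l,\overline{\chi}}(u)u^{-l-1}\,du$.

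Finally, the pointwise bound $|\overline{E}_{l,\overline{\chi}}(u)|\leq 4\,l!(k/\pi)^{l+1}\zeta(l+1)$ recorded in Section~3 makes the tail integral $O(t^{-l})$ as $t\to+\infty$, giving a remainder of that order at level $l$. Since $l$ is arbitrary, the asymptotic expansion follows to all orders. The main obstacle I anticipate is the general-$l$ identity for $2\ell'(0,\chi)$: one must justify differentiation under the integral uniformly in $l$ and correctly track the derivatives of the two rising factorials at $s=0$. Once that lemma is secured, the remainder of the argument is routine bookkeeping.
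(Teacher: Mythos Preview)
Your proposal is correct and follows essentially the same route as the paper. The only difference is organizational: the paper applies Theorem~\ref{C-B-S} with $l=1$, invokes (\ref{lf8}) to insert $2\ell'(0,\chi)$, and then reaches the higher-order terms by ``integrating by parts repeatedly with the use of (\ref{lf6})''; you instead apply Theorem~\ref{C-B-S} with a free $l$ and correspondingly establish the general-$l$ version of (\ref{lf8}) (which is indeed obtained by differentiating the displayed expansion of $2\ell(s,\chi)$ at $s=0$, exactly as you describe). Since repeated integration by parts via (\ref{8b}) is precisely the mechanism that raises the level $l$ in Theorem~\ref{C-B-S}, the two arguments are the same computation in different packaging. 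Your concern about differentiating under the integral is routine here, as the integral is absolutely and locally uniformly convergent in $s$.
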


\begin{proof}
Let $f(x)=\log\left(  t/x\right)  ,$ $\alpha=1$ and $\beta=t$ and $l=1$ in
Theorem \ref{C-B-S}. Then%
\begin{align}
&  2\chi\left(  -1\right)  \sum_{1<n<t}\left(  -1\right)  ^{n}\chi\left(
n\right)  \log\left(  t/n\right) \nonumber\\
&  \;=2\chi\left(  -1\right)  \sum_{1\leq n<t}\left(  -1\right)  ^{n}%
\chi\left(  n\right)  \log\left(  t/n\right)  +2\log t\nonumber\\
&  \;=-\overline{E}_{1,\overline{\chi}}\left(  1\right)  -\overline
{E}_{0,\overline{\chi}}\left(  1\right)  \log t+\frac{\overline{E}%
_{1,\overline{\chi}}\left(  t\right)  }{t}+\int\limits_{1}^{t}\frac
{\overline{E}_{1,\overline{\chi}}\left(  x\right)  }{x^{2}}dx\nonumber\\
&  \;=2\chi\left(  -1\right)  \ell^{\prime}\left(  0,\chi\right)
-\overline{E}_{0,\overline{\chi}}\left(  1\right)  \log t+\frac{\overline
{E}_{1,\overline{\chi}}\left(  t\right)  }{t}-\int\limits_{t}^{\infty}%
\frac{\overline{E}_{1,\overline{\chi}}\left(  x\right)  }{x^{2}}dx,
\label{lf6}%
\end{align}
where we have used (\ref{lf8}). Using that $\overline{E}_{0,\overline{\chi}%
}\left(  1\right)  =\overline{E}_{0,\overline{\chi}}\left(  0\right)
-2\chi\left(  -1\right)  $ and $\ell\left(  0,\chi\right)  =\overline
{E}_{0,\overline{\chi}}\left(  0\right)  ,$ and integrating by parts
repeatedly with the use of (\ref{lf6}), one arrives at the asymptotic formula.
\end{proof}

We now apply Theorem \ref{C-B-S} to the function $f(x)=\log\left(  x+a\right)
,$ $-\pi<\arg a<\pi,$ where $\alpha=0,$ $\beta=2kN,$ $N\in\mathbb{N}$, and
$l=1$ to obtain
\begin{align}
&  2\sum_{n=1}^{2kN}\left(  -1\right)  ^{n}\chi\left(  n\right)  \log\left(
n+a\right) \nonumber\\
&  =-\overline{E}_{0,\overline{\chi}}\left(  0\right)  \log\left(
2kN+a\right)  -\frac{\overline{E}_{1,\overline{\chi}}\left(  0\right)
}{2kN+a}\nonumber\\
&  \quad+\overline{E}_{0,\overline{\chi}}\left(  0\right)  \log a+\chi\left(
-1\right)  \frac{\overline{E}_{1,\overline{\chi}}\left(  0\right)  }{a}%
-\chi\left(  -1\right)  \int\limits_{0}^{2Nk}\frac{\overline{E}_{1,\overline
{\chi}}\left(  x\right)  }{\left(  x+a\right)  ^{2}}dx. \label{lf5}%
\end{align}
Gathering (\ref{lf6}) and (\ref{lf5}) for $t=2kN,$ $N\in\mathbb{N}$, then
letting $N\rightarrow\infty$ and using (\ref{lf8}), we find that
\begin{align}
&  -2\sum_{n=1}^{\infty}\left(  -1\right)  ^{n}\chi\left(  n\right)  \left(
\log\left(  n\right)  -\log\left(  n+a\right)  \right) \nonumber\\
&  \quad=2\ell^{\prime}\left(  0,\chi\right)  +\chi\left(  -1\right)
\frac{\overline{E}_{1,\overline{\chi}}\left(  0\right)  }{a}+\overline
{E}_{0,\overline{\chi}}\left(  0\right)  \log a-\chi\left(  -1\right)
\int\limits_{0}^{\infty}\frac{\overline{E}_{1,\overline{\chi}}\left(
x\right)  }{\left(  x+a\right)  ^{2}}dx. \label{StF1}%
\end{align}
Note that the sum in (\ref{StF1}) is reminiscent of the definition of
character analogue of the gamma function defined by Berndt \cite[Definition
4]{2}. This motivates us to make the following definition.

\begin{definition}
\label{gamma}Let $\chi$ be a real primitive character. Define
\[
\Gamma^{\ast}\left(  a,\chi\right)  =\prod_{n=1}^{\infty}\left(  \frac{n}%
{n+a}\right)  ^{\left(  -1\right)  ^{n}\chi\left(  n\right)  }.
\]

\end{definition}

In the light of this definition, (\ref{StF1}) becomes\textbf{\ }%
\begin{equation}
2\log\Gamma^{\ast}\left(  a,\chi\right)  =-\overline{E}_{0,\overline{\chi}%
}\left(  0\right)  \log a-2\ell^{\prime}\left(  0,\chi\right)  -\chi\left(
-1\right)  \frac{\overline{E}_{1,\overline{\chi}}\left(  0\right)  }{a}%
+\chi\left(  -1\right)  \int\limits_{0}^{\infty}\frac{\overline{E}%
_{1,\overline{\chi}}\left(  x\right)  }{\left(  x+a\right)  ^{2}}dx,
\label{lf4}%
\end{equation}
which shows that $\Gamma^{\ast}\left(  a,\chi\right)  $ is well defined and
analytic for $-\pi<\arg a<\pi.$

Combining (\ref{1b}) and (\ref{lf4}), we infer the \textit{Lerch's formula}
for $\ell\left(  s,a,\chi\right)  $ as%
\begin{equation}
\ell^{\prime}\left(  0,a,\chi\right)  =\log\Gamma^{\ast}\left(  a,\chi\right)
+\ell^{\prime}\left(  0,\chi\right)  , \label{35}%
\end{equation}
which is the character analogue of the familiar formula%
\[
\zeta^{\prime}\left(  0,z\right)  =\log\Gamma\left(  z\right)  +\zeta^{\prime
}\left(  0\right)  ,
\]
where $\Gamma\left(  z\right)  $ and $\zeta\left(  z\right)  $\ are the Euler
gamma and Riemann zeta functions, respectively.

Furthermore, in (\ref{lf4}), integrating by parts repeatedly in view of
(\ref{8b}), we arrive at the following asymptotic formula, the counterpart of
\cite[Proposition 5.3]{2}.

\begin{proposition}
[Stirling's formula for $\log\Gamma^{\ast}\left(  a,\chi\right)  $]%
\label{Stf}For $-\pi<\arg a<\pi,$ as $a$ tends to $\infty,$
\[
\log\Gamma^{\ast}\left(  a,\chi\right)  \sim-\frac{1}{2}\ell\left(
0,\chi\right)  \log a-\ell^{\prime}\left(  0,\chi\right)  -\frac{\chi\left(
-1\right)  }{2}\sum\limits_{j=1}^{\infty}\frac{\overline{E}_{j,\overline{\chi
}}\left(  0\right)  }{ja^{j}},
\]
where the principal branch of the logarithm is taken.\textbf{\ }
\end{proposition}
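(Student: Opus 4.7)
The plan is to start from the identity (\ref{lf4}) already established in the excerpt and show that everything except the leading logarithm and $\ell'(0,\chi)$ can be expanded into the asymptotic series by repeated integration by parts of the integral
\[
I(a)=\int_{0}^{\infty}\frac{\overline{E}_{1,\overline{\chi}}(x)}{(x+a)^{2}}\,dx.
\]
First I would invoke the identity $\ell(0,\chi)=\overline{E}_{0,\overline{\chi}}(0)$ (stated just above the proposition) to rewrite the first term of (\ref{lf4}) as $-\ell(0,\chi)\log a$, which already accounts for the $-\tfrac{1}{2}\ell(0,\chi)\log a$ and $-\ell'(0,\chi)$ pieces of the claimed expansion after dividing by $2$. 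The remaining task reduces to showing
\[
-\chi(-1)\,\frac{\overline{E}_{1,\overline{\chi}}(0)}{a}+\chi(-1)\,I(a)\;\sim\;-\chi(-1)\sum_{j=1}^{\infty}\frac{\overline{E}_{j,\overline{\chi}}(0)}{j\,a^{j}}.
\]

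The key step is an iterated integration by parts using (\ref{8b}). Since $\overline{E}_{j+1,\overline{\chi}}(x)/(j+1)$ is an antiderivative of $\overline{E}_{j,\overline{\chi}}(x)$ and the functions $\overline{E}_{j,\overline{\chi}}$ are bounded (being quasi-periodic by (\ref{8d})), taking $u=(x+a)^{-(j+1)}$, $dv=\overline{E}_{j,\overline{\chi}}(x)\,dx$ gives
\[
\int_{0}^{\infty}\frac{\overline{E}_{j,\overline{\chi}}(x)}{(x+a)^{j+1}}\,dx=-\frac{\overline{E}_{j+1,\overline{\chi}}(0)}{(j+1)\,a^{j+1}}+\int_{0}^{\infty}\frac{\overline{E}_{j+1,\overline{\chi}}(x)}{(x+a)^{j+2}}\,dx,
\]
with vanishing boundary term at infinity. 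Applying this successively starting from $j=1$ yields, for every $N\ge 1$,
\[
I(a)=-\sum_{j=2}^{N}\frac{\overline{E}_{j,\overline{\chi}}(0)}{j\,a^{j}}+R_{N}(a),\qquad R_{N}(a)=\int_{0}^{\infty}\frac{\overline{E}_{N,\overline{\chi}}(x)}{(x+a)^{N+1}}\,dx.
\]
Adding $-\chi(-1)\overline{E}_{1,\overline{\chi}}(0)/a$ incorporates the missing $j=1$ term into the finite sum, so substitution into (\ref{lf4}) and division by $2$ produces the desired expansion up to the error $\tfrac{\chi(-1)}{2}R_{N}(a)$.

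What remains is to check that $R_{N}(a)=O(|a|^{-N})$ in any sector $|\arg a|\le\pi-\delta$. This follows from the uniform bound $|\overline{E}_{N,\overline{\chi}}(x)|\le C_{N}$ together with the geometric inequality $|x+a|\ge |a|\sin\delta$ for $x\ge 0$ and $|\arg a|\le\pi-\delta$ (which is immediate when $\operatorname{Re}(a)\ge 0$ and follows from minimizing $|x+a|^{2}=x^{2}+2x|a|\cos(\arg a)+|a|^{2}$ when $\operatorname{Re}(a)<0$); this gives $|R_{N}(a)|\le C_{N}'|a|^{-N}$.

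The main obstacle is the remainder control in the complex sector; once the sector estimate above is in hand, everything else is routine bookkeeping. A secondary subtlety worth flagging is that the integration-by-parts scheme genuinely uses that $\overline{E}_{j,\overline{\chi}}(x)$ (for $j\ge 1$) is continuous, so one does not encounter jump discontinuities analogous to those of $\overline{E}_{0,\overline{\chi}}$; this is why one starts the iteration at $j=1$ rather than $j=0$, and why (\ref{lf4}) is the natural initial identity rather than the earlier $l=0$ version displayed just above (\ref{22}).
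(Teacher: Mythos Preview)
Your approach is exactly the paper's: start from (\ref{lf4}) and integrate by parts repeatedly using (\ref{8b}); the paper's entire proof is the one-line remark to that effect. One small technical slip in your remainder estimate: the inequality $|x+a|\ge|a|\sin\delta$ by itself does not control $\int_0^\infty|x+a|^{-N-1}dx$ (it gives no decay in $x$, so the integral would diverge); what you actually need in the sector $|\arg a|\le\pi-\delta$ is $|x+a|\ge c_\delta\,(x+|a|)$ (e.g.\ with $c_\delta=\sin(\delta/2)$, by minimizing $|x+a|^2/(x+|a|)^2$), after which $\int_0^\infty (x+|a|)^{-N-1}dx=N^{-1}|a|^{-N}$ delivers the claimed $R_N(a)=O(|a|^{-N})$.
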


Next we write the integral in (\ref{1b}) as in the form\textbf{\ }%
\begin{align*}
\int\limits_{0}^{\infty}\frac{\overline{E}_{1,\overline{\chi}}\left(
x\right)  }{\left(  x+a\right)  ^{2}}dt  &  =\sum\limits_{n=0}^{\infty}%
\int\limits_{2nk}^{2\left(  n+1\right)  k}\frac{\overline{E}_{1,\overline
{\chi}}\left(  x\right)  }{\left(  x+a\right)  ^{2}}dx\\
&  =\frac{1}{\left(  2k\right)  ^{2}}\int\limits_{0}^{2k}\overline
{E}_{1,\overline{\chi}}\left(  t\right)  \sum\limits_{n=0}^{\infty}\left(
n+\frac{t+a}{2k}\right)  ^{-2}dt\\
&  =\frac{1}{\left(  2k\right)  ^{2}}\int\limits_{0}^{2k}\overline
{E}_{1,\overline{\chi}}\left(  t\right)  \zeta\left(  2,\frac{t+a}{2k}\right)
dt.
\end{align*}
So, Eq. (\ref{1b}) becomes
\begin{equation}
2\ell^{\prime}\left(  0,a,\chi\right)  =-\overline{E}_{0,\overline{\chi}%
}\left(  0\right)  \log a-\frac{1}{a}\overline{E}_{1,\overline{\chi}}\left(
0\right)  +\frac{\chi\left(  -1\right)  }{\left(  2k\right)  ^{2}}%
\int\limits_{0}^{2k}\overline{E}_{1,\overline{\chi}}\left(  t\right)
\zeta\left(  2,\frac{t+a}{2k}\right)  dt. \label{2}%
\end{equation}
Since
\begin{equation}
\frac{d^{2}}{dz^{2}}\log\Gamma\left(  z\right)  =\frac{d}{dz}\psi\left(
z\right)  =\zeta\left(  2,z\right)  , \label{turev}%
\end{equation}
where $\psi\left(  z\right)  $\ is the digamma function, the integral in
(\ref{2}) may be arisen from Theorem \ref{C-B-S} by setting $f(x)=\log
\Gamma\left(  \left(  x+a\right)  /2k\right)  ,$ $\alpha=0,$ $\beta=2k$ and
$l=1.$\ Under the circumstances,
\begin{align}
&  2\sum_{n=0}^{2k-1}\left(  -1\right)  ^{n}\chi\left(  n\right)  \log
\Gamma\left(  \frac{n+a}{2k}\right) \nonumber\\
&  =-\overline{E}_{0,\overline{\chi}}\left(  0\right)  \log\frac{a}{2k}%
-\frac{1}{a}\overline{E}_{1,\overline{\chi}}\left(  0\right)  +\frac
{\chi\left(  -1\right)  }{\left(  2k\right)  ^{2}}\int\limits_{0}%
^{2k}\overline{E}_{1,\overline{\chi}}\left(  x\right)  \zeta\left(
2,\frac{x+a}{2k}\right)  dx, \label{1a}%
\end{align}
where we have used that $\Gamma\left(  z+1\right)  =z\Gamma\left(  z\right)  $
and $\psi\left(  z+1\right)  -\psi\left(  z\right)  =1/z.$ Assembling
(\ref{2}) and (\ref{1a}), we have%
\begin{equation}
2\ell^{\prime}\left(  0,a,\chi\right)  =-\overline{E}_{0,\overline{\chi}%
}\left(  0\right)  \log\left(  2k\right)  +2\sum_{n=1}^{2k-1}\left(
-1\right)  ^{n}\chi\left(  n\right)  \log\Gamma\left(  \frac{n+a}{2k}\right)
. \label{3}%
\end{equation}

The following proposition shows that $\Gamma^{\ast}\left(  a,\chi\right)  $ is
a quotient of ordinary gamma functions.

\begin{proposition}
We have%
\[
\Gamma^{\ast}\left(  a,\chi\right)  =%
{\displaystyle\prod\limits_{n=1}^{2k-1}}
\left(  \frac{\Gamma\left(  \left(  n+a\right)  /2k\right)  }{\Gamma\left(
n/2k\right)  }\right)  ^{\left(  -1\right)  ^{n}\chi\left(  n\right)  }.
\]

\end{proposition}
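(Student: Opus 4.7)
The plan is to combine the two identities already at our disposal: equation (3), which gives a closed form for $2\ell'(0,a,\chi)$ in terms of a finite sum of $\log\Gamma((n+a)/2k)$, and Lerch's formula (35), which identifies $\ell'(0,a,\chi)-\ell'(0,\chi)$ with $\log\Gamma^{\ast}(a,\chi)$. The idea is to evaluate (3) at $a=0$ to extract a self-contained expression for $2\ell'(0,\chi)$, then subtract that from (3) so that the $-\overline{E}_{0,\overline{\chi}}(0)\log(2k)$ constant and the isolated $\ell'(0,\chi)$ both drop out.

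More concretely, I will first check that (3) extends continuously to $a=0$. Recall (3) was obtained by subtracting (1a) from (2); in that subtraction the divergent $\log a$ and $1/a$ terms cancel, so the right-hand side of (3) is finite at $a=0$. The $n=0$ summand, whose $\log\Gamma(0)$ factor is ill-defined, is killed by $\chi(0)=0$ (which is forced by $k>1$ and $\chi$ primitive), so the sum over $1\leq n\leq 2k-1$ is harmless. Setting $a=0$ therefore yields
\[
2\ell'(0,\chi) = -\overline{E}_{0,\overline{\chi}}(0)\log(2k) + 2\sum_{n=1}^{2k-1}(-1)^{n}\chi(n)\log\Gamma\!\left(\frac{n}{2k}\right).
\]

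Subtracting this from (3) eliminates both the $\log(2k)$ term and the $\ell'(0,\chi)$ offset, leaving
\[
2\bigl(\ell'(0,a,\chi)-\ell'(0,\chi)\bigr) = 2\sum_{n=1}^{2k-1}(-1)^{n}\chi(n)\log\!\left(\frac{\Gamma((n+a)/2k)}{\Gamma(n/2k)}\right).
\]
Applying Lerch's formula (35) to the left side turns $\ell'(0,a,\chi)-\ell'(0,\chi)$ into $\log\Gamma^{\ast}(a,\chi)$. Exponentiating the resulting identity then produces the advertised product representation.

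The only real obstacle is the $a\to 0$ step, and it is mild: one must confirm that the finite-sum representation (3) is valid at $a=0$ (equivalently, that the cancellations between (2) and (1a) carry through to the boundary) and that the vanishing factor $\chi(0)$ correctly absorbs the $n=0$ singularity. Everything else is formal algebra once (3) and (35) are in hand, so no new analytic input is needed.
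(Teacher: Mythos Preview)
Your approach is correct and follows the same overall strategy as the paper: compute $\ell'(0,\chi)$ from (3), subtract, and invoke the Lerch-type identity (35). The only difference is in how the value of $\ell'(0,\chi)$ is extracted. The paper sets $a=2k$ in (3) and combines the result with the elementary relation
\[
\ell'(0,2k,\chi)=\ell'(0,\chi)-\sum_{n=1}^{2k-1}(-1)^{n}\chi(n)\log n,
\]
which comes from differentiating $\ell(s,2k,\chi)=\ell(s,\chi)-\sum_{n=1}^{2k-1}(-1)^n\chi(n)n^{-s}$; this keeps $a$ safely in the interior of the domain where (2) and (1a) were derived. You instead pass to $a=0$ directly. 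That is legitimate, but note two small points: the sum in (3) already runs over $1\le n\le 2k-1$, so there is no $n=0$ term to worry about; and since the ingredients (2) and (1a) feeding into (3) are only established for $-\pi<\arg a<\pi$, the cleanest justification for $a=0$ is simply that both sides of (3) are analytic in a neighbourhood of $a=0$ and agree for $a>0$, hence agree at $a=0$ by continuation. Either route yields the same closed form for $\ell'(0,\chi)$, after which the argument is identical.
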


\begin{proof}
From%
\[
\ell\left(  s,2k,\chi\right)  =\ell\left(  s,\chi\right)  -\sum_{n=1}%
^{2k-1}\left(  -1\right)  ^{n}\chi\left(  n\right)  n^{-s},
\]
it is seen that
\begin{equation}
\ell^{\prime}\left(  0,2k,\chi\right)  =\ell^{\prime}\left(  0,\chi\right)
-\sum_{n=1}^{2k-1}\left(  -1\right)  ^{n}\chi\left(  n\right)  \log n.
\label{33}%
\end{equation}
Setting $a=2k$ in (\ref{3}) and then comparing with (\ref{33}) give
\[
\ell^{\prime}\left(  0,\chi\right)  =-\frac{1}{2}\ell\left(  0,\chi\right)
\log\left(  2k\right)  +\sum_{n=1}^{2k-1}\left(  -1\right)  ^{n}\chi\left(
n\right)  \log\Gamma\left(  \frac{n}{2k}\right)  .
\]
Substituting this in (\ref{35}) and then combining with (\ref{3}) lead to
\begin{align}
\log\Gamma^{\ast}\left(  a,\chi\right)   &  =\sum_{n=1}^{2k-1}\left(
-1\right)  ^{n}\chi\left(  n\right)  \left(  \log\Gamma\left(  \frac{n+a}%
{2k}\right)  -\log\Gamma\left(  \frac{n}{2k}\right)  \right) \label{37}\\
&  =\sum_{n=1}^{2k-1}\left(  -1\right)  ^{n}\chi\left(  n\right)  \log\left(
\frac{\Gamma\left(  \left(  n+a\right)  /2k\right)  }{\Gamma\left(
n/2k\right)  }\right)  ,\nonumber
\end{align}
which is the desired result.
\end{proof}

Let us continue by differentiating both sides of (\ref{37}) with respect to
$a$. Then, we have\textbf{\ }%
\begin{equation}
\frac{d}{da}\log\Gamma^{\ast}\left(  a,\chi\right)  =\frac{1}{2k}\sum
_{n=1}^{2k-1}\left(  -1\right)  ^{n}\chi\left(  n\right)  \psi\left(
\frac{n+a}{2k}\right)  , \label{39}%
\end{equation}
by (\ref{turev}). For the convenience with (\ref{turev}),\ the right-hand side
of (\ref{39}) can be denoted by $\psi^{\ast}\left(  a,\chi\right)  ,$ i.e.,
\[
\psi^{\ast}\left(  a,\chi\right)  =\frac{1}{2k}\sum_{n=1}^{2k-1}\left(
-1\right)  ^{n}\chi\left(  n\right)  \psi\left(  \frac{n+a}{2k}\right)  .
\]
On the other hand, in the light of (\ref{35}), differentiating both sides of
(\ref{1b}) with respect to $a,$ and then comparing with (\ref{22}) for $s=1$,
we see that
\begin{equation}
\ell\left(  1,a,\chi\right)  =-\psi^{\ast}\left(  a,\chi\right)  =-\frac
{1}{2k}\sum_{n=1}^{2k-1}\left(  -1\right)  ^{n}\chi\left(  n\right)
\psi\left(  \frac{n+a}{2k}\right)  . \label{36}%
\end{equation}
In general, for $m\geq0$ we have
\begin{equation}
\frac{d^{m}}{da^{m}}\psi^{\ast}\left(  a,\chi\right)  =\left(  -1\right)
^{m+1}m!\ell\left(  m+1,a,\chi\right)  , \label{42}%
\end{equation}
which implies the following identity, viewed as the Taylor expansion of
$\ell\left(  s,a,\chi\right)  $ in the second variable $a$.

\begin{proposition}
\label{41a}For $|z|<1$ we have
\begin{equation}
\sum_{m=2}^{\infty}\ell\left(  m,a,\chi\right)  z^{m-1}=\psi^{\ast}\left(
a,\chi\right)  -\psi^{\ast}\left(  a-z,\chi\right)  \label{41}%
\end{equation}

\end{proposition}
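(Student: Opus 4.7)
The plan is to recognise the identity as the Taylor expansion of $\psi^{\ast}(a-z,\chi)$ about $z=0$ and read it off directly from (\ref{42}) together with the specialisation (\ref{36}). Concretely, I would write
\begin{equation*}
\psi^{\ast}(a-z,\chi)=\sum_{m=0}^{\infty}\frac{(-z)^{m}}{m!}\,\frac{d^{m}}{da^{m}}\psi^{\ast}(a,\chi),
\end{equation*}
valid on any disc about $z=0$ on which $z\mapsto\psi^{\ast}(a-z,\chi)$ is holomorphic. Substituting (\ref{42}) collapses the sign and factorial to $(-z)^{m}(-1)^{m+1}m!/m!=-z^{m}$, giving
\begin{equation*}
\psi^{\ast}(a-z,\chi)=-\sum_{m=0}^{\infty}\ell(m+1,a,\chi)\,z^{m}.
\end{equation*}
Isolating the $m=0$ term and invoking (\ref{36}) in the form $-\ell(1,a,\chi)=\psi^{\ast}(a,\chi)$, then relabelling $m\mapsto m-1$ in the remaining tail, rearranges to the desired identity (\ref{41}).

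The only substantive point is the region of validity. Since
\begin{equation*}
\psi^{\ast}(a-z,\chi)=\frac{1}{2k}\sum_{n=1}^{2k-1}(-1)^{n}\chi(n)\,\psi\!\left(\frac{n+a-z}{2k}\right),
\end{equation*}
the singularities in $z$ inherited from the simple poles of $\psi$ lie at the points $z=n+a+2km$ with $1\leq n\leq 2k-1$ and $m\in\mathbb{Z}_{\geq 0}$. Under the standing assumption that $a$ avoids non-positive integer shifts (e.g.\ $\operatorname{Re}(a)>0$, which is the natural setting of the surrounding discussion), the nearest such singularity satisfies $|z|\geq|1+a|>1$, so the Taylor series converges on $|z|<1$. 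This analyticity bookkeeping is the main (and only mild) obstacle; everything else is a direct application of Taylor's theorem to $\psi^{\ast}(a-z,\chi)$ combined with the derivative formula (\ref{42}) and the base case (\ref{36}).
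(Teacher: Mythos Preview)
Your proof is correct and follows essentially the same approach as the paper: the paper's proof consists of the single sentence ``The statement follows from the Taylor expansion of $\psi^{\ast}(z,\chi)$ at $z=a$,'' and you have written out exactly that computation in detail. Your additional discussion of the radius of convergence is more careful than anything the paper provides.
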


\begin{proof}
The statement follows from the Taylor expansion of $\psi^{\ast}\left(
z,\chi\right)  $ at $z=a.$
\end{proof}

The character analogue of the Weierstrass product representation of
$\Gamma\left(  s\right)  $ can be derived from Definition \ref{gamma} and also
from Proposition \ref{41a}.

\begin{proposition}
\label{GEP}We have for all $s$%
\begin{equation}
\Gamma^{\ast}\left(  s,\chi\right)  =e^{-s\ell\left(  1,\chi\right)  }%
{\displaystyle\prod\limits_{n=1}^{\infty}}
\left[  \left(  1+s/n\right)  ^{-1}e^{s/n}\right]  ^{\left(  -1\right)
^{n}\chi\left(  n\right)  }, \label{45}%
\end{equation}
where the product converges uniformly on any compact set $S$ which avoids the
points $s=-n$, where $n$ is a positive integer and $\left(  -1\right)
^{n}\chi\left(  n\right)  =1$.
\end{proposition}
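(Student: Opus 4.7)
The plan is to derive (\ref{45}) straight from Definition \ref{gamma} by inserting the standard Weierstrass convergence factor $e^{s/n}$. For each $n\geq 1$ one has the trivial identity
\[
\left(\frac{n}{n+s}\right)^{(-1)^n\chi(n)} = \left[\left(1+\frac{s}{n}\right)^{-1}e^{s/n}\right]^{(-1)^n\chi(n)}\cdot e^{-(-1)^n\chi(n)s/n},
\]
so that multiplying over $n=1,\ldots,N$ and pulling out the exponentials yields
\[
\prod_{n=1}^{N}\left(\frac{n}{n+s}\right)^{(-1)^n\chi(n)} = \prod_{n=1}^{N}\left[\left(1+\frac{s}{n}\right)^{-1}e^{s/n}\right]^{(-1)^n\chi(n)}\exp\!\left(-s\sum_{n=1}^{N}\frac{(-1)^n\chi(n)}{n}\right).
\]
Letting $N\to\infty$, the last factor converges to $e^{-s\ell(1,\chi)}$ by the very definition of $\ell(s,\chi)$ at $s=1$, while the left side converges to $\Gamma^{\ast}(s,\chi)$ by Definition \ref{gamma}, provided all three limits exist.

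The convergence analysis is the heart of the argument. First, $\ell(1,\chi)=\sum_{n\geq 1}(-1)^n\chi(n)/n$ converges by Dirichlet's test, since the partial sums of $(-1)^n\chi(n)$ are bounded (as $\chi$ is a non-principal primitive character, so the partial sums of $\chi(n)$ are bounded) while $1/n\downarrow 0$. Second, for the Weierstrass-type product on the right, the elementary expansion
\[
\log\!\left[(1+s/n)^{-1}e^{s/n}\right] = \sum_{k\geq 2}(-1)^k\frac{s^k}{k\,n^k} = O\!\left(\frac{|s|^2}{n^2}\right),
\]
valid once $n>|s|$ and uniform on any compact set $S$ avoiding the points $s=-n$ with $(-1)^n\chi(n)=1$, shows that the associated logarithmic series converges absolutely and uniformly on $S$. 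This is the only genuine obstacle; once established, the Weierstrass product defines an analytic function on the claimed domain, the three limits above fit together, and (\ref{45}) follows.

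Alternatively, (\ref{45}) may be obtained analytically from Proposition \ref{41a}. Setting $a=0$ in (\ref{41}) and substituting $z\mapsto -a$ gives, for $|a|<1$,
\[
\psi^{\ast}(a,\chi) = -\ell(1,\chi) + \sum_{k\geq 1}(-1)^{k+1}\ell(k+1,\chi)\,a^k,
\]
where (\ref{36}) was used to identify $\psi^{\ast}(0,\chi)=-\ell(1,\chi)$. Integrating from $0$ to $s$, and noting $\log\Gamma^{\ast}(0,\chi)=0$, yields
\[
\log\Gamma^{\ast}(s,\chi) = -s\,\ell(1,\chi) + \sum_{j\geq 2}(-1)^j\frac{s^j}{j}\ell(j,\chi).
\]
Taking the logarithm of the right-hand side of (\ref{45}) and swapping the two summations, again licensed by the $O(1/n^2)$ estimate above, reproduces exactly these Taylor coefficients. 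The identity thus holds inside the unit disc and extends to the full domain by analytic continuation.
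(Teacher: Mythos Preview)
Your proposal is correct and mirrors the paper's own treatment: the paper notes that the direct argument from Definition~\ref{gamma} (your first approach, inserting the Weierstrass factor $e^{s/n}$ and checking convergence) is ``exactly like the proof of Berndt \cite[Proposition 5.4]{2}'' and omits the details, then gives the alternative derivation via Proposition~\ref{41a} by integrating (\ref{41}) and specializing $a=0$, $s\to -s$, which is precisely your second approach with the order of substitution and integration swapped. Both routes you present are sound; the only cosmetic point is that you reuse $k$ as a summation index while the paper reserves it for the modulus of $\chi$.
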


\begin{proof}
The proof from Definition \ref{gamma} is exactly like the proof of Berndt
\cite[Proposition 5.4]{2}, so we omit it.

For the proof via Proposition \ref{41a}, integrating (\ref{41}) from $0$ to
$s$, we see that%
\[
\sum_{m=2}^{\infty}\ell\left(  m,a,\chi\right)  \frac{s^{m}}{m}=\log
\Gamma^{\ast}\left(  a-s,\chi\right)  -\log\Gamma^{\ast}\left(  a,\chi\right)
+s\psi^{\ast}\left(  a,\chi\right)  .
\]
Taking $s\rightarrow-s$ and $a=0,$ we have%
\begin{align}
\sum_{m=2}^{\infty}\ell\left(  m,0,\chi\right)  \frac{\left(  -s\right)  ^{m}%
}{m}  &  =\log\Gamma^{\ast}\left(  s,\chi\right)  -\log\Gamma^{\ast}\left(
0,\chi\right)  -s\psi^{\ast}\left(  0,\chi\right) \nonumber\\
&  =\log\Gamma^{\ast}\left(  s,\chi\right)  +s\ell\left(  1,\chi\right)  .
\label{43}%
\end{align}
The left-hand side of (\ref{43}) is
\begin{equation}
\sum_{n=1}^{\infty}\left(  -1\right)  ^{n}\chi\left(  n\right)  \sum
_{m=2}^{\infty}\frac{1}{m}\left(  -\frac{s}{n}\right)  ^{m}=\sum_{n=1}%
^{\infty}\left(  -1\right)  ^{n}\chi\left(  n\right)  \left[  \frac{s}{n}%
-\log\left(  1+\frac{s}{n}\right)  \right]  , \label{47}%
\end{equation}
where we have used that
\[
\sum_{m=2}^{\infty}\frac{r^{m}}{m}=-r-\log\left(  1-r\right)  ,\text{ for
}\left\vert r\right\vert <1.
\]
Combining (\ref{43}) and (\ref{47}) gives (\ref{45}).
\end{proof}

Note that another consequence of (\ref{36}) with $\psi\left(  1-x\right)
-\psi\left(  x\right)  =\pi\cot\pi x$ is%
\begin{equation}
2\ell\left(  m+1,\chi\right)  =-\frac{\left(  -\pi/2k\right)  ^{m+1}}{m!}%
\sum_{n=1}^{2k-1}\left(  -1\right)  ^{n}\chi\left(  n\right)  \cot^{\left(
m\right)  }\left(  \frac{\pi n}{2k}\right)  ,\text{ }m\geq0, \label{48a}%
\end{equation}
when $\chi\left(  -1\right)  \left(  -1\right)  ^{m+1}=1.$ Indeed, it is easy
to see that for $0\leq a<1,$%
\begin{align}
&  \ell\left(  1,a,\chi\right)  -\chi\left(  -1\right)  \ell\left(
1,-a,\chi\right) \nonumber\\
&  =-\frac{1}{2k}\sum_{n=1}^{2k-1}\left(  -1\right)  ^{n}\chi\left(  n\right)
\left\{  \psi\left(  \frac{n+a}{2k}\right)  -\psi\left(  1-\frac{n+a}%
{2k}\right)  \right\} \nonumber\\
&  =\frac{\pi}{2k}\sum_{n=1}^{2k-1}\left(  -1\right)  ^{n}\chi\left(
n\right)  \cot\left(  \pi\frac{n+a}{2k}\right)  . \label{48b}%
\end{align}
Now (\ref{48a}) follows from (\ref{42}) and (\ref{48b}) for $\chi\left(
-1\right)  \left(  -1\right)  ^{m+1}=1$\ and $a=0.$ In particular,%
\begin{align*}
2\ell\left(  1,\chi\right)   &  =\frac{\pi}{2k}\sum_{n=1}^{2k-1}\left(
-1\right)  ^{n}\chi\left(  n\right)  \cot\left(  \frac{\pi n}{2k}\right)
,\text{ for odd }\chi,\\
2\ell\left(  2,\chi\right)   &  =\left(  \frac{\pi}{2k}\right)  ^{2}\sum
_{n=1}^{2k-1}\left(  -1\right)  ^{n}\frac{\chi\left(  n\right)  }{\sin
^{2}\left(  \frac{\pi n}{2k}\right)  },\text{ for even }\chi,\\
2\ell\left(  3,\chi\right)   &  =\left(  \frac{\pi}{2k}\right)  ^{3}\sum
_{n=1}^{2k-1}\left(  -1\right)  ^{n}\chi\left(  n\right)  \frac{\cos\left(
\frac{\pi n}{2k}\right)  }{\sin^{3}\left(  \frac{\pi n}{2k}\right)  },\text{
for odd }\chi,\\
2\ell\left(  4,\chi\right)   &  =\frac{1}{3}\left(  \frac{\pi}{2k}\right)
^{4}\sum_{n=1}^{2k-1}\left(  -1\right)  ^{n}\chi\left(  n\right)  \left(
\frac{2}{\sin^{4}\left(  \frac{\pi n}{2k}\right)  }+\frac{\cos\left(
\frac{\pi n}{k}\right)  }{\sin^{4}\left(  \frac{\pi n}{2k}\right)  }\right)
,\text{ for even }\chi,
\end{align*}
which are analogues of Eqs. (5.9)--(5.12) of Alkan \cite{4}. Such sums and
many ones can be found in \cite{27,28,30}.

\subsection{Counterparts of the Examples 6--10 of \cite{2}}

In this part, we constitute $f\left(  x\right)  $ in Theorem \ref{C-B-S} in
order to give some formulas, the counterparts of the Examples 6--10 of
\cite{2}.

$\bullet$ Let $f(x)=e^{xt},$ $\alpha=0$ and $\beta=k$. Then%
\[
2\sum_{n=0}^{k}\left(  -1\right)  ^{n}\chi\left(  n\right)  e^{nt}=\chi\left(
-1\right)  \sum\limits_{j=0}^{l}\left(  -1\right)  ^{j+1}\overline
{E}_{j,\overline{\chi}}\left(  0\right)  \frac{t^{j}}{j!}\left(
e^{kt}+1\right)  -R_{l},
\]
where%
\begin{align}
\left\vert R_{l}\right\vert  &  \leq\frac{\left\vert t^{l+1}\right\vert }%
{l!}\int\limits_{0}^{k}\left\vert \overline{E}_{l,\overline{\chi}}\left(
x\right)  e^{xt}\right\vert dx\nonumber\\
&  \leq4e^{kt}\frac{\left\vert t^{l+1}\right\vert }{\left(  \pi/k\right)
^{l+1}}\zeta\left(  l+1\right)  \rightarrow0\text{\ as }l\rightarrow
\infty\text{ for }\left\vert t\right\vert <\pi/k. \label{29}%
\end{align}
Thus, we have the generating function for the number $E_{j,\overline{\chi}%
}\left(  0\right)  $ as
\[
\sum_{n=0}^{k-1}\left(  -1\right)  ^{n}\chi\left(  n\right)  \frac{2e^{nt}%
}{e^{kt}+1}=\sum\limits_{j=0}^{\infty}E_{j,\overline{\chi}}\left(  0\right)
\frac{t^{j}}{j!}.
\]

$\bullet$ Let $f(x)=\cos(xt),$ $\alpha=0$ and $\beta=k$. It is obvious from
(\ref{8c}) that $\overline{E}_{j,\chi}\left(  0\right)  =0$ if $\chi$ and $j$
have the same parity. If $\chi$ is odd, then
\[
2\sum_{n=0}^{k-1}\left(  -1\right)  ^{n}\chi\left(  n\right)  \cos\left(
nt\right)  =\chi\left(  -1\right)  \sum\limits_{j=0}^{l}\frac{\left(
-1\right)  ^{2j+1}}{(2j)!}\overline{E}_{2j,\overline{\chi}}\left(  0\right)
\left(  \cos\left(  kt\right)  +1\right)  t^{2j}(-1)^{j}-R_{l}%
\]
where, as in (\ref{29}), $R_{l}$ tends to $0$ as $l\rightarrow\infty$ for
$\left\vert t\right\vert <\pi/k$. So, we have
\[
\frac{2\sum_{n=1}^{k-1}\left(  -1\right)  ^{n}\chi\left(  n\right)
\cos\left(  nt\right)  }{\cos\left(  kt\right)  +1}=\sum\limits_{j=0}^{\infty
}\left(  -1\right)  ^{j}E_{2j,\overline{\chi}}\left(  0\right)  \frac{t^{2j}%
}{(2j)!},\text{ for }\left\vert t\right\vert <\frac{\pi}{k}.
\]
If $\chi$ is even, then similarly
\[
\frac{2\sum_{n=1}^{k-1}\left(  -1\right)  ^{n}\chi\left(  n\right)
\cos\left(  nt\right)  }{\sin\left(  kt\right)  }=\sum\limits_{j=0}^{\infty
}\left(  -1\right)  ^{j}E_{2j+1,\overline{\chi}}\left(  0\right)
\frac{t^{2j+1}}{(2j+1)!},\text{ for }\left\vert t\right\vert <\frac{\pi}{k}.
\]

$\bullet$ Let $f(x)=\sin(xt),$ $\alpha=0$ and $\beta=k$. If $\chi$ is odd,
then for $\left\vert t\right\vert <\pi/k$%
\[
\frac{2\sum_{n=1}^{k-1}\left(  -1\right)  ^{n}\chi\left(  n\right)
\sin\left(  nt\right)  }{\sin\left(  kt\right)  }=\sum\limits_{j=0}^{\infty
}(-1)^{j}E_{2j,\overline{\chi}}\left(  0\right)  \frac{t^{2j}}{(2j)!}%
\]
and if $\chi$ is even%
\[
\frac{2\sum_{n=1}^{k-1}\left(  -1\right)  ^{n}\chi\left(  n\right)
\sin\left(  nt\right)  }{\cos\left(  kt\right)  +1}=\sum\limits_{j=0}^{\infty
}(-1)^{j}E_{2j+1,\overline{\chi}}\left(  0\right)  \frac{t^{2j+1}}{(2j+1)!}.
\]

$\bullet$ By the similar way, it can be seen that if $\chi$ is odd, then for
$\left\vert t\right\vert <\pi/k$
\begin{align*}
\frac{2\sum_{n=0}^{k}\left(  -1\right)  ^{n}\chi\left(  n\right)  \cosh\left(
nt\right)  }{\cosh(kt)+1}  &  =\frac{2\sum_{n=1}^{k-1}\left(  -1\right)
^{n}\chi\left(  n\right)  \sinh\left(  nt\right)  }{\sinh(kt)}\\
&  =\sum\limits_{j=0}^{\infty}E_{2j,\overline{\chi}}\left(  0\right)
\frac{t^{2j}}{(2j)!}%
\end{align*}
and if $\chi$ is even%
\begin{align*}
\frac{2\sum_{n=0}^{k}\left(  -1\right)  ^{n}\chi\left(  n\right)  \cosh\left(
nt\right)  }{\sinh(kt)}  &  =\frac{2\sum_{n=0}^{k}\left(  -1\right)  ^{n}%
\chi\left(  n\right)  \sinh\left(  nt\right)  }{\cosh(kt)+1}\\
&  =\sum\limits_{j=0}^{\infty}E_{2j+1,\overline{\chi}}\left(  0\right)
\frac{t^{2j+1}}{(2j+1)!}.
\end{align*}

\section{Proofs of reciprocity theorems}

\begin{proof}
[Proof of Theorem \ref{recip2}]Let $f(x)=\overline{E}_{p,\chi}\left(
xb/c\right)  ,$ $\alpha=0$ and $\beta=ck$ in Theorem \ref{BS}. By virtue of
(\ref{8b}), for $1\leq l\leq p,$ one has\textbf{\ }%
\begin{align*}
2\sum_{n=0}^{ck}\left(  -1\right)  ^{n}\overline{E}_{p,\chi}\left(  n\frac
{b}{c}\right)   &  =\sum\limits_{j=0}^{l-1}\frac{E_{j}\left(  0\right)  }%
{j!}\left(  \frac{b}{c}\right)  ^{j}\frac{p!}{(p-j)!}\left(  \left(
-1\right)  ^{ck-1}\overline{E}_{p-j,\chi}\left(  bk\right)  +\overline
{E}_{p-j,\chi}\left(  0\right)  ^{\text{\ }}\right) \\
&  \quad+\frac{p!}{(l-1)!(p-l)!}\left(  \frac{b}{c}\right)  ^{l}%
\int\limits_{0}^{ck}\overline{E}_{p-j,\chi}\left(  \frac{b}{c}x\right)
\overline{E}_{l-1}\left(  -x\right)  dx.
\end{align*}
For odd $b+c,$ with the use of (\ref{8d}), one can write%
\begin{align}
2\sum_{n=0}^{ck}\left(  -1\right)  ^{n}\overline{E}_{p,\chi}\left(  n\frac
{b}{c}\right)   &  =2\sum\limits_{j=0}^{l-1}\left(  \frac{b}{c}\right)
^{j}\binom{p}{j}E_{j}\left(  0\right)  \overline{E}_{p-j,\chi}\left(  0\right)
\label{4a}\\
&  \quad+l^{\text{\ }}\binom{p}{l}\left(  \frac{b}{c}\right)  ^{l}\left(
-1\right)  ^{l}c\int\limits_{0}^{k}\overline{E}_{p-l,\chi}\left(  bx\right)
\overline{E}_{l-1}\left(  cx\right)  dx.\nonumber
\end{align}
Now, let $f(x)=\overline{E}_{p}\left(  xc/b\right)  ,$ $\alpha=0$ and
$\beta=bk$ in Theorem \ref{C-B-S}. Using (\ref{8a}),
\begin{align*}
&  2\chi\left(  -1\right)  \sum_{n=0}^{bk}\left(  -1\right)  ^{n}\chi\left(
n\right)  \overline{E}_{p}\left(  n\frac{c}{b}\right) \\
&  =\sum\limits_{j=0}^{l}\frac{\left(  -1\right)  ^{j}}{j!}\left(  \frac{c}%
{b}\right)  ^{j}\frac{p!}{(p-j)!}\left(  \overline{E}_{j,\overline{\chi}%
}\left(  bk\right)  \overline{E}_{p-j}\left(  ck\right)  -\overline
{E}_{j,\overline{\chi}}(0)\overline{E}_{p-j}\left(  0\right)  ^{\text{\ }%
}\right) \\
&  \quad-\frac{(-1)^{l}}{l!}\frac{p!}{(p-l-1)!}\left(  \frac{c}{b}\right)
^{l+1}\int\limits_{0}^{bk}\overline{E}_{l,\overline{\chi}}\left(  x\right)
\overline{E}_{p-l-1}\left(  x\frac{c}{b}\right)  dx\\
&  =\sum\limits_{j=0}^{l}\left(  -1\right)  ^{j}\binom{p}{j}\left(  \frac
{c}{b}\right)  ^{j}\overline{E}_{j,\overline{\chi}}\left(  0\right)
\overline{E}_{p-j}\left(  0\right)  ((-1)^{b+c}-1)\\
&  \quad-(-1)^{l}p\binom{p-1}{l}\left(  \frac{c}{b}\right)  ^{l+1}%
b\int\limits_{0}^{k}\overline{E}_{l,\overline{\chi}}\left(  bx\right)
\overline{E}_{p-l-1}\left(  cx\right)  dx,
\end{align*}
for\textbf{\ }$0\leq l\leq p-2.$ Then, for odd $b+c,$ we have%
\begin{align}
2\chi\left(  -1\right)  \sum_{n=0}^{bk}\left(  -1\right)  ^{n}\chi\left(
n\right)  \overline{E}_{p}\left(  n\frac{c}{b}\right)   &  =2\sum
\limits_{j=0}^{l}\left(  -1\right)  ^{j+1}\binom{p}{j}\left(  \frac{c}%
{b}\right)  ^{j}\overline{E}_{j,\overline{\chi}}\left(  0\right)  \overline
{E}_{p-j}\left(  0\right) \label{4b}\\
&  -(-1)^{l}p\binom{p-1}{l}\left(  \frac{c}{b}\right)  ^{l+1}b\int%
\limits_{0}^{k}\overline{E}_{l,\overline{\chi}}\left(  bx\right)  \overline
{E}_{p-l-1}\left(  cx\right)  dx.\nonumber
\end{align}
Taking $\chi\rightarrow\overline{\chi}$ and $l=2$ in (\ref{4a}) leads to%
\begin{align}
S_{p}^{\left(  1\right)  }\left(  b,c:\overline{\chi}\right)   &  =2\sum
_{n=0}^{ck}\left(  -1\right)  ^{n}\overline{E}_{p,\overline{\chi}}\left(
n\frac{b}{c}\right) \nonumber\\
&  =2E_{0}\left(  0\right)  \overline{E}_{p,\overline{\chi}}\left(  0\right)
+2\frac{bp}{c}E_{1}\left(  0\right)  \overline{E}_{p-1,\overline{\chi}}\left(
0\right) \nonumber\\
&  \quad+p(p-1)\left(  \frac{b}{c}\right)  ^{2}c\int\limits_{0}^{k}%
\overline{E}_{p-2,\overline{\chi}}\left(  bx\right)  \overline{E}_{1}\left(
cx\right)  dx. \label{4c}%
\end{align}
Taking $l=p-2$ in (\ref{4b}) yields%
\begin{align}
S_{p}^{\left(  2\right)  }(c,b:\chi)  &  =2\sum_{n=1}^{bk}\left(  -1\right)
^{n}\chi\left(  n\right)  \overline{E}_{p}\left(  n\frac{c}{b}\right)
\nonumber\\
&  =2\chi\left(  -1\right)  \sum\limits_{j=0}^{p-2}\left(  -1\right)
^{j+1}\binom{p}{j}\left(  \frac{c}{b}\right)  ^{j}\overline{E}_{j,\overline
{\chi}}\left(  0\right)  \overline{E}_{p-j}\left(  0\right) \nonumber\\
&  \quad-(-1)^{p}\chi\left(  -1\right)  p(p-1)\left(  \frac{c}{b}\right)
^{p-1}b\int\limits_{0}^{k}\overline{E}_{p-2,\overline{\chi}}\left(  bx\right)
\overline{E}_{1}\left(  cx\right)  dx. \label{4d}%
\end{align}
Combining (\ref{4c}) and (\ref{4d}), one obtains that
\[
c^{p}S_{p}^{\left(  1\right)  }\left(  b,c:\overline{\chi}\right)  +b^{p}%
S_{p}^{\left(  2\right)  }\left(  c,b:\chi\right)  =2\sum\limits_{j=0}%
^{p}\binom{p}{j}c^{j}b^{p-j}\overline{E}_{j,\overline{\chi}}\left(  0\right)
\overline{E}_{p-j}\left(  0\right)  ,
\]
for odd $(b+c)$ and $(-1)^{p}\chi\left(  -1\right)  =1$.
\end{proof}

\begin{proof}
[Proof of Theorem \ref{recip1}]The definition of
\[
S_{p}\left(  b,c:\chi\right)  =\sum\limits_{n=1}^{ck}\chi\left(  n\right)
\overline{B}_{p,\overline{\chi}}\left(  \frac{b+ck}{2c}n\right)
\]
in this form is not convenient to prove reciprocity formula by aid of
Euler--MacLaurin or Boole summation formula. So, $S_{p}\left(  b,c:\chi
\right)  $ should be modified to apply summation formulas. For this, using
(\ref{31}) in the definition of $S_{p}\left(  b,c:\chi\right)  $, and then
\cite[Lemma 5.5]{8}, we see that%
\begin{align}
S_{p}\left(  b,c:\chi\right)   &  =2^{-p}\chi\left(  2\right)  \sum
\limits_{n=1}^{ck}\chi\left(  n\right)  \overline{B}_{p,\overline{\chi}%
}\left(  \frac{bn}{c}\right)  -p\frac{\chi\left(  2\right)  }{2^{p+1}}%
\sum\limits_{n=1}^{ck}\chi\left(  n\right)  \overline{E}_{p-1,\chi}\left(
\frac{bn}{c}+kn\right) \nonumber\\
&  =\frac{\chi\left(  2c\right)  \overline{\chi}\left(  -b\right)  }%
{2^{p}c^{p-1}}\left(  k^{p}-1\right)  \overline{B}_{p}\left(  0\right)
-p\frac{\chi\left(  2\right)  }{2^{p+1}}\sum\limits_{n=1}^{ck}\left(
-1\right)  ^{n}\chi\left(  n\right)  \overline{E}_{p-1,\chi}\left(  \frac
{bn}{c}\right)  . \label{20}%
\end{align}
Now let $f\left(  x\right)  =\overline{E}_{p-1,\chi}\left(  xb/c\right)  ,$
$\alpha=0$ and $\beta=ck$ in Theorem \ref{C-B-S}. Then, in the light of
(\ref{8b}), we can write%
\begin{align}
&  \sum_{n=0}^{ck}\left(  -1\right)  ^{n}\chi\left(  n\right)  \overline
{E}_{p-1,\chi}\left(  n\frac{b}{c}\right) \nonumber\\
&  \ =\frac{\chi\left(  -1\right)  }{2}\sum\limits_{j=0}^{l}\left(  -1\right)
^{j}\binom{p-1}{j}\left(  \frac{b}{c}\right)  ^{j}\left\{  \left(  \left(
-1\right)  ^{(b+c)}-1\right)  \overline{E}_{j,\overline{\chi}}\left(
0\right)  \overline{E}_{p-1-j,\chi}\left(  0\right)  \right\} \nonumber\\
&  \quad-\frac{\chi\left(  -1\right)  }{2}\left(  -1\right)  ^{l}%
(p-1)\binom{p-2}{l}\left(  \frac{b}{c}\right)  ^{l+1}\int\limits_{0}%
^{ck}\overline{E}_{l,\overline{\chi}}\left(  x\right)  \overline
{E}_{p-2-l,\chi}\left(  \frac{b}{c}x\right)  dx. \label{19}%
\end{align}
Following precisely the method in the proof of Theorem \ref{recip2} and using
that $\overline{B}_{p}\left(  0\right)  =0$ for odd $p$ yield%
\begin{align*}
&  \overline{\chi}\left(  -2\right)  bc^{p}S_{p}\left(  b,c:\chi\right)
+\chi\left(  -2\right)  cb^{p}S_{p}\left(  c,b:\overline{\chi}\right) \\
&  =\frac{p}{2^{p+1}}\sum\limits_{j=1}^{p}\left(  -1\right)  ^{j}\binom
{p-1}{j-1}c^{j}b^{p+1-j}\overline{E}_{j-1,\chi}\left(  0\right)  \overline
{E}_{p-j,\overline{\chi}}\left(  0\right)  .
\end{align*}

\end{proof}

\begin{remark}
Taking into consideration (\ref{31}), this formula coincides with
\cite[Corollary 4.3]{29} wherein there is the condition $b$ or $c\equiv
0\left(  \operatorname{mod}k\right)  .$
\end{remark}

We conclude the study with some results for the integral involving character
Euler functions in consequence\ of (\ref{19}) and (\ref{20}). We first note
that the sum on the left-hand side of (\ref{19}) is zero when $p$ and $\left(
b+c\right)  $ have opposite parity. Therefore, if $p>1$ is odd and $(b+c)$ is
even, then%
\[
\int\limits_{0}^{k}\overline{E}_{l,\overline{\chi}}\left(  x\right)
\overline{E}_{p-2-l,\chi}\left(  \frac{b}{c}x\right)  dx=0
\]
and if $p$ is even and $\left(  b+c\right)  $ is odd, then%
\begin{align*}
&  \int\limits_{0}^{k}\overline{E}_{l,\overline{\chi}}\left(  cx\right)
\overline{E}_{p-2-l,\chi}\left(  bx\right)  dx\\
&  =\frac{2\left(  -c/b\right)  ^{l+1}}{c\left(  p-1\right)  \binom{p-2}{l}%
}\sum\limits_{j=0}^{l}\left(  -1\right)  ^{j}\binom{p-1}{j}\left(  \frac{b}%
{c}\right)  ^{j}\overline{E}_{j,\overline{\chi}}\left(  0\right)  \overline
{E}_{p-1-j,\chi}\left(  0\right)  .
\end{align*}

Let $p$ and $\left(  b+c\right)  $ be even. Gathering $S_{p}(b,c:\chi
)=c^{1-p}\chi\left(  2c\right)  \overline{\chi}\left(  -b\right)  \left(
k^{p}-1\right)  \overline{B}_{p}\left(  0\right)  $ (\cite[Proposition
5.7]{29}) and (\ref{20}), one arrives%
\[
\sum\limits_{n=1}^{ck}\left(  -1\right)  ^{n}\chi\left(  n\right)
\overline{E}_{p-1,\chi}\left(  \frac{bn}{c}\right)  =\frac{1}{p}2\left(
1-2^{p}\right)  c^{1-p}\chi\left(  c\right)  \overline{\chi}\left(  -b\right)
\left(  k^{p}-1\right)  B_{p}.
\]

Thus, from the fact that $2\left(  2^{p}-1\right)  B_{p}=-pE_{p-1}\left(
0\right)  ,$ we have
\[
\int\limits_{0}^{k}\overline{E}_{l,\overline{\chi}}\left(  cx\right)
\overline{E}_{p-2-l,\chi}\left(  bx\right)  dx=2\left(  -1\right)  ^{l+1}%
\frac{\chi\left(  c\right)  \overline{\chi}\left(  b\right)  }{c^{p-l-1}%
b^{l+1}}\frac{\left(  k^{p}-1\right)  }{\binom{p-2}{l}}\frac{E_{p-1}\left(
0\right)  }{p-1}.
\]

\end{document}